	\titleformat{\section}[block]{\Large\bfseries\filcenter}{\thesection}{1em}{}
\theoremstyle{plain}
\renewcommand*\thesection{\arabic{section}}
\numberwithin{equation}{section} 
\theoremstyle{plain}
\newtheorem{thm}{Theorem}
\newtheorem{lemma}[thm]{Lemma}
\newtheorem{theorem}{Theorem}
\theoremstyle{definition}
\newtheorem{remark}[thm]{Remark}
\newtheorem{definition}[thm]{Definition}
\newtheorem{claim}[thm]{Claim}
\newcommand{\thistheoremname}{}
\newtheorem{genericthm}[equation]{\thistheoremname}
\newcommand{\thistheoremnames}{}
\newtheorem*{genericthms}{\thistheoremnames}
\newenvironment{para*}[1]
  {\renewcommand{\thistheoremnames}{#1}%
   \begin{genericthms}}
  {\end{genericthms}}
\let\expandafter\oldproof\csname\string\proof\endcsname
\let\oldendproof\endproof
\renewenvironment{proof}[1][\proofname]{%
  \oldproof[\upshape \bfseries #1]%
}%
{\oldendproof}
\def\@makechapterhead#1{%
  \vspace*{50\p@}%
  {\parindent \z@ \raggedright \normalfont
    \interlinepenalty\@M
    \Huge\bfseries  \thechapter.\quad #1\par\nobreak
    \vskip 40\p@
  }}
\newcommand{\reqnomode}{\tagsleft@false}
\def \d{\,{\rm d}}
\def\supp{\,{\rm supp \,}}
\DeclareRobustCommand*{\bfseries}{%
  \not@math@alphabet\bfseries\mathbf
  \fontseries\bfdefault\selectfont
  \boldmath
}
\newlength{\defbaselineskip}
\newcommand\eps\varepsilon
\def\mean#1{\mathchoice%
          {\mathop{\kern 0.2em\vrule width 0.6em height 0.69678ex depth -0.58065ex
                  \kern -0.8em \intop}\nolimits_{\kern -0.4em#1}}%
          {\mathop{\kern 0.1em\vrule width 0.5em height 0.69678ex depth -0.60387ex
                  \kern -0.6em \intop}\nolimits_{#1}}%
          {\mathop{\kern 0.1em\vrule width 0.5em height 0.69678ex
              depth -0.60387ex
                  \kern -0.6em \intop}\nolimits_{#1}}%
          {\mathop{\kern 0.1em\vrule width 0.5em height 0.69678ex depth -0.60387ex
                  \kern -0.6em \intop}\nolimits_{#1}}}
\newcommand\blfootnote[1]{%
  \begingroup
  \renewcommand\thefootnote{}\footnote{#1}%
  \addtocounter{footnote}{-1}%
  \endgroup
}
\def\eqn#1$$#2$${\begin{equation}\label#1#2\end{equation}}
\newcommand\R{\mathbb{R}}
\newcommand{\F}{\mathscr F}
\def \tp{\textup}
\def \p{\partial}
\def \e{\varepsilon}
\def \D{\mathrm D}
\def \LL {\mathrm L}
\def \WW{\mathrm W}
\newcommand\restr[2]{{
  \left.\kern-\nulldelimiterspace 
  #1 
  \vphantom{|} 
  \right|_{#2} 
  }
}
\def\XXint#1#2#3{\setbox0=\hbox{$#1{#2#3}{\int}$}\vcenter{\hbox{$#2#3$}}\kern-.5\wd0}
\newcommand{%
    
    \import{./}{.pdf_tex}
  }[1]{%
    
    \import{./}{#1.pdf_tex}
  }
\title{On global absence of Lavrentiev gap for functionals with $(p,q)$-growth}
\author[1]{Lukas Koch}
  \affil[1]{\small MPI for Mathematics in the Sciences, Inselstrasse 22, 04177 Leipzig, Germany
\protect \\
  {\tt{kochl@mis.mpg.de}}
  \vspace{1em} \ }
\begin{document}
\maketitle

\begin{abstract}
We prove that for convex vectorial functionals with $(p,q)$-growth the Lavrentiev phenomenon does not occur up to the boundary when $(p,q)$ are suitably restricted. Under minimal assumptions on the boundary data, we obtain results for autonomous and non-autonomous functionals, under natural, controlled and controlled duality growth bounds. 
\end{abstract}

\blfootnote{
\emph{2010 Mathematics Subject Classification:} 35J60, 35J20\\ 
\emph{Keywords.} Lavrentiev phenomenon, nonstandard growth, vectorial convex functional\\

}

\section{Introduction and results}
We consider minimisation problems of the form
\begin{align}\label{eq:min}
\min_{u\in g+\WW^{1,p}_0(\Omega,\R^m)}\F(u) \quad\text{ where }\quad \F(u)= \int_\Omega F(x,\D u)\d x.
\end{align}
Here $\Omega\subset \R^n$ is a Lipschitz domain, $g\in \WW^{1,q}(\Omega,\R^m)$ is the Dirichlet boundary condition and $F$ is an integrand satisfying $(p,q$)-growth conditions in the gradient variable.

It is well known that the Lavrentiev phenomenon provides a fundamental obstruction to the regularity theory regarding \eqref{eq:min}. The Lavrentiev phenomenon describes the fact that it may occur that
$$
\min_{u\in g+\WW^{1,p}_0(\Omega,\R^m)}\F(u) < \min_{u\in g+\WW^{1,q}_0(\Omega,\R^m)} \F(u).
$$
This observation was first made in \cite{Lavrentiev1926}. With regards to \eqref{eq:min}, the theory was further developed in \cite{Zhikov1987}, \cite{Zhikov1993} and \cite{Zhikov1995}. In this paper, we adopt the viewpoint and terminology of \cite{Buttazo1992} and view the Lavrentiev phenomenon through the so-called Lavrentiev gap. 

Suppose $X$ is a topological space of weakly differentiable functions and $Y\subset X$. Introduce
\begin{align*}
\overline \F_X = \sup\{\,\mathscr G\colon X\to[0,\infty]: \mathscr G \text{ slsc }, \mathscr G\leq \F \text{ on } X\,\}\\
\overline \F_{Y} = \sup\{\,\mathscr G\colon X\to[0,\infty]: \mathscr G \text{ slsc }, \mathscr G\leq \F \text{ on } Y\,\}\nonumber.
\end{align*}
The Lavrentiev gap functional is then defined for $u\in X$ as
\begin{align*}
\mathscr L(u,X,Y)=\begin{cases}
		\overline \F_{Y}(u)-\overline \F_X(u)  &\text{ if } \overline \F_X(u)<\infty\\
		0 &\text{ else}.
	\end{cases}
\end{align*}
Note that $L(u,X,Y)\geq 0$ and that $L(u,X,Y)>0$ for some $u\in X$ when the Lavrentiev phenomenon occurs. However, in general, it could be that $L(u,X,Y)>0$ for some $u\in X$, but the Lavrentiev phenomenon does not occur. There is an extensive literature on the Lavrientiev phenomenon and gap functional in this abstract set-up, an overview of which can be found in \cite{Buttazo1995}, \cite{Foss2001} to which we also refer for further references.

We remark that the Lavrentiev phenomenon is also of interest in nonlinear elasticity \cite{Foss2003}. Further, an important direction of study of the Lavrentiev gap functional is to obtain measure representations of $\overline \F_Y$, in particular in the case when $X=\WW^{1,p}(\Omega,\R^m)$ and $Y = \WW^{1,q}_{\tp{loc}}(\Omega,\R^m)$. We do not pursue this direction further here, but refer to \cite{Fonseca1997,Acerbi2003} for results and further references.

In order to compare our assumptions and results to the literature, we state them precisely. Let $1<p\leq q$. Suppose $F(\cdot,z)$ is continuous for any $z\in \R^{n\times m}$ and $F(x,\cdot)$ is $C^1(\Omega)$ for almost every $x\in \Omega$. We will always assume a strict convexity assumption on $F$: There is $\nu>0$ and $\mu\in[0,1]$ such that for almost every $x\in\Omega$ and every $z\in \R^{n\times m}$,
\begin{gather}\label{def:ellipticity}
\nu(\mu^2+\lvert z\rvert^2+\lvert w\rvert^2)^\frac{p-2}{2}\leq\frac{F(x,z)-F(x,w)-\langle \partial_z F(x,w),z-w\rangle}{\lvert z-w\rvert^2}\tag{H1}.
\end{gather}
We quantify the growth of $F$ through one of the following growth conditions: There is $\Lambda>0$ such that for almost every $x\in\Omega$ and every $w,z\in \R^{n\times m}$,
\begin{gather}
\label{def:naturalGrowth}
\lvert F(x,z)\rvert\leq \Lambda(1+\lvert z\rvert^2)^\frac q 2\tag{H2}\\
\label{def:controlledGrowth}
\frac{F(x,z)-F(x,w)-\langle \partial_z F(x,w),z-w\rangle}{\lvert z-w\rvert^2}\leq \Lambda(1+\lvert z\rvert^2+\lvert w\rvert^2)^\frac{q-2} 2\tag{H3}\\
\label{def:controlledDualityGrowth}
\frac{F(x,z)-F(x,w)-\langle \partial_z F(x,w),z-w\rangle}{\lvert z-w\rvert^2}\leq \Lambda(1+\lvert \p_z F(z)\rvert^2+\lvert\p_z F(w)\rvert^2)^\frac{q-2} {2(q-1)}\tag{H4}.
\end{gather}
We remark that under these assumptions the existence of a solution to \eqref{eq:min} follows from the direct method.
\eqref{def:naturalGrowth}--\eqref{def:controlledDualityGrowth} are increasingly strong assumptions, that are referred to as natural growth, controlled growth and controlled duality growth assumptions, respectively.

Finally, we assume that $F(\cdot,z)$ is H\"older-continuous, that is for almost every $x,y\in\Omega$ and every $z\in\R^{n\times m}$,
\begin{gather}
\label{def:xHolder} \lvert F(x,z)-F(y,z)\rvert\leq \Lambda \lvert x-y\rvert^\alpha\left(1+ \lvert z\rvert^2\right)^\frac q 2.\tag{H5}
\end{gather}

In the case of controlled growth conditions, we need to make the strong assumption that $F$ is doubling, that is there is $s_0>0$ such that for $s\in[1,s_0)$, 
\begin{align}\label{def:doubling}
F(s z)\lesssim 1+F(z).\tag{H6}
\end{align}
It would be desirable to remove this assumption.

We remark that a consequence of any of \eqref{def:naturalGrowth}--\eqref{def:controlledDualityGrowth} in combination with the convexity of $F(x,\cdot)$ expressed in \eqref{def:ellipticity} is the bound:
\begin{align}\label{eq:diffF}
\lvert F(x,z)-F(x,w)\rvert\lesssim \lvert z-w\rvert(1+\lvert z\rvert+\lvert w\rvert)^{q-1}
\end{align}
for almost every $x\in\Omega$ and every $z,w\in \R^{n\times m}$.

For technical reasons, we additionally require the following condition, which we will discuss in more detail soon: there is $\e_0>0$ such that for any $\e\in(0,\e_0)$ and $x\in\Omega$ there is $\hat y\in \overline{B_\e(x)\cap\Omega}$ 
such that
\begin{align}\label{def:xCondition}
F(\hat y,z)\leq F(y,z) \qquad\forall y\in \overline{B_\e(x)\cap\Omega},\hspace{0.5cm} z\in \R^{n\times m}.\tag{H6}
\end{align}

Introduce
\begin{align}\label{eq:relaxedFunc}
\overline \F(u) = \inf\left\{\liminf \F(u_j)\colon (u_j)\subset Y, \, u_j\rightharpoonup u \text{ weakly in } X\right\},
\end{align}
where $X = g+\WW^{1,p}_0(\Omega,\R^m)$ and $Y = g+\WW^{1,q}_0(\Omega,\R^m)$. Since $F(x,z)$ is convex due to \eqref{def:ellipticity}, with this choice of $X$ and $Y$, standard methods show that $\overline\F_X(\cdot)=\F(\cdot)$, see \cite[Chapter 4]{Giusti2003}. Further $\overline \F_Y(\cdot)=\overline \F(\cdot)$.

We then prove the following result:
\begin{theorem}\label{thm:noLavrentiev}
Let $1< p\leq q$ and suppose $\Omega\subset \R^n$ is a Lipschitz domain. Assume $g\in \WW^{1,q}(\Omega,\R^m)$ and take $\alpha\in(0,1)$. Let $u\in g+\WW^{1,p}_0(\Omega,\R^m)$.
Suppose $F$ satisfies \eqref{def:ellipticity} and one of the following conditions hold:
\begin{enumerate}
\item $q< \frac{(n+\alpha)p} n$ and $F$ satisfies \eqref{def:naturalGrowth}, \eqref{def:xHolder} and \eqref{def:xCondition},\label{eq:nonautonomous}
\item $q< \min\left(p+1,\frac{np}{n-1}\right)$ and $F\equiv F(z)$ satisfies \eqref{def:naturalGrowth},\label{eq:natural}
\item $p\leq q<p+\max\left(1,\frac p n\right)$, $u\in\LL^\infty(\Omega)$ and $F\equiv F(z)$ satisfies \eqref{def:naturalGrowth},\label{eq:Linftynatural}
\item  $p\leq q<p+\max\left(1,\frac p n\right)$, $m=1$ and $F\equiv F(z)$ satisfies \eqref{def:naturalGrowth},\label{eq:scalarnatural}
\end{enumerate}
Then
$$
\F(u)= \overline\F(u).
$$
If $F$ satisfies \eqref{def:ellipticity}, $u\in g+\WW^{1,p}_0(\Omega)$ solves \eqref{eq:min} and one of the following conditions holds:
\begin{enumerate}\setcounter{enumi}{4}
\item $2\leq p\leq q< \min\left(p+2,p\left(1+\frac 2 {n-1}\right)\right)$ and $F\equiv F(z)$ satisfies \eqref{def:controlledGrowth}, \eqref{def:doubling}\label{eq:controlled}
\item $2\leq p\leq q< \frac{np}{n-p}$ and $F\equiv F(z)$ satisfies \eqref{def:controlledDualityGrowth},\label{eq:controlledDuality}
\item $2<p\leq q<p+\max\left(2,\frac {2p} n\right)$, $u\in \LL^\infty(\Omega)$ and $F\equiv F(z)$ satisfies \eqref{def:controlledGrowth},\label{eq:Linftycontrolled}
\item  $2\leq p\leq q<p+\max\left(2,\frac {2p} n\right)$, $m=1$ and $F\equiv F(z)$ satisfies \eqref{def:controlledGrowth}, \eqref{def:doubling}\label{eq:scalarcontrolled},
\end{enumerate}
then 
\begin{align*}
\F(u) = \inf_{v\in g+\WW^{1,q}_0(\Omega)} \F(v).
\end{align*}
\end{theorem}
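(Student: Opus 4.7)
The overall strategy is to reduce each of the eight cases to constructing a single recovery sequence: given $u\in g+\WW^{1,p}_0(\Omega,\R^m)$ with $\F(u)<\infty$, produce $(u_j)\subset g+\WW^{1,q}_0(\Omega,\R^m)$ with $u_j\rightharpoonup u$ weakly in $\WW^{1,p}$ and $\limsup_j\F(u_j)\leq \F(u)$. In cases (i)--(iv) this yields the nontrivial inequality $\overline\F(u)\leq \F(u)$ via the characterisation \eqref{eq:relaxedFunc}; the reverse inequality is automatic, because convexity of $F(x,\cdot)$ from \eqref{def:ellipticity} makes $\F$ sequentially weakly lower semicontinuous on $\WW^{1,p}$, so $\F$ itself is an admissible competitor in the supremum defining $\overline\F_Y$. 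In cases (v)--(viii) the minimality of $u$ in $g+\WW^{1,p}_0$ gives $\F(u)\leq \inf_{v\in g+\WW^{1,q}_0}\F(v)$ for free, and the same type of recovery sequence provides the reverse inequality.

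For the construction I would use a partition of unity between a thin boundary layer $\{\dist(\cdot,\p\Omega)<\delta\}$ and the interior. In the interior I mollify at scale $\e\ll\delta$, setting $u_\e = u*\rho_\e$, and estimate $\F(u_\e)$ by first inserting the frozen coefficient $\hat y_{x,\e}$ from \eqref{def:xCondition} on the ball $B_\e(x)$: the monotonicity $F(\hat y_{x,\e},\cdot)\leq F(y,\cdot)$ combined with Jensen's inequality bounds $\int F(\hat y_{x,\e},\D u_\e)\dx$ by $\F(u)$, while the oscillation $\lvert F(x,\cdot)-F(\hat y_{x,\e},\cdot)\rvert$ is controlled pointwise by \eqref{def:xHolder}. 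Near $\p\Omega$, I would flatten the boundary locally via Lipschitz charts, split $u=(u-g)+g$, mollify only $u-g$ after an inward shift at scale $1+c\e$ so that the support retreats from $\p\Omega$, and stitch the interior and boundary pieces together through the partition of unity so that the result lies in $g+\WW^{1,q}_0$. Strong $\LL^p$-convergence together with a uniform $\WW^{1,p}$-bound on $(u_j)$ yields the weak convergence.

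The exponent thresholds arise from a scaling balance between the $L^p\!\to\! L^q$ cost of approximation at scale $\e$ and the gains available in each case. For (i) the $\e^\alpha$ gain from \eqref{def:xHolder} exactly compensates the extra integrability required for the mollified gradient, producing $q<(n+\alpha)p/n$; for (ii) the interior problem is essentially free (autonomy removes the Hölder loss) and the binding constraint comes from the boundary trace, giving the threshold $np/(n-1)$. The $\LL^\infty$-hypothesis in (iii) and (vii) allows truncating the height of $u$ before mollifying, decoupling the growth gap from $\e$; the scalar structure in (iv) and (viii) permits truncating the minimiser itself by De Giorgi-type arguments to the same effect. For (v)--(viii) the second-order remainder
\[
F(\D u_\e)-F(\D u)-\langle \p_z F(\D u),\D u_\e-\D u\rangle
\]
is controlled by \eqref{def:controlledGrowth} or \eqref{def:controlledDualityGrowth} and is integrable provided $\D u\in\LL^q_{\loc}$; the latter follows from Gehring-type higher integrability of minimisers under the stated restrictions, while the duality variant \eqref{def:controlledDualityGrowth} widens the admissible range to the Sobolev exponent $np/(n-p)$ by exploiting the extra regularity of $\p_z F(\D u)$.

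The main technical obstacle will be reconciling the interior mollification with preservation of the Dirichlet datum: mollification alone does not respect the boundary trace, so the inward-shift plus partition-of-unity step must be carefully calibrated so that the $\WW^{1,q}$-cost of the boundary layer matches its measure $\lvert\{\dist(\cdot,\p\Omega)<\delta\}\rvert\sim\delta$. A secondary difficulty specific to case (i) is that the frozen-coefficient comparison via \eqref{def:xCondition} degenerates near $\p\Omega$, where the ball $B_\e(x)$ must be intersected with $\overline\Omega$; \eqref{def:xCondition} is formulated exactly to cover this, but its interaction with the partition-of-unity construction still requires care.
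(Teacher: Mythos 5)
Your overall reduction (construct a recovery sequence $(u_j)\subset g+\WW^{1,q}_0$ with $u_j\rightharpoonup u$ weakly in $\WW^{1,p}$ and $\limsup_j\F(u_j)\leq\F(u)$, then appeal to weak lower semicontinuity for the reverse inequality) matches the paper, and the truncation devices for the $\LL^\infty$ and scalar cases are essentially those used in Lemmas \ref{lem:Linfty} and \ref{lem:scalar}. The boundary construction you sketch is different from the paper's: you propose a two-region split with fixed-scale interior mollification and an inward scaling near $\p\Omega$, whereas the paper builds $u_\e=\sum_i u\star\rho_{\delta_i\e}\psi_i$ on a Whitney--Besicovitch covering with a mollification scale $\delta_i\sim r_i^N$ that shrinks towards $\p\Omega$. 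The paper's version of your ``inward shift'' (star-shaped decomposition and rescaling) appears only in the preliminary reduction of Section 3; the real work in Sections 4--5 depends on the variable scale, because the exponent thresholds $q<np/(n-1)$ in (ii) and $q<p(1+2/(n-1))$ in (v) come from the sphere-slicing interpolation of Lemma \ref{lem:schaffner}, not from boundary-layer bookkeeping. Your heuristic for (ii)--(v) does not reproduce this and would not, as written, reach those thresholds.

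The decisive gap is in cases (v)--(viii). You write that the second-order remainder with expansion point $\D u$ ``is integrable provided $\D u\in\LL^q_{\loc}$; the latter follows from Gehring-type higher integrability.'' This is not available: Gehring's lemma gives $\D u\in\LL^{p+\sigma}_{\loc}$ for a small $\sigma>0$ depending only on the data, while the theorem allows $q$ up to $p+2$ (or $p(1+2/(n-1))$, or $np/(n-p)$). A priori $\WW^{1,q}_{\loc}$-regularity of the minimiser of $\F$ (as opposed to $\overline\F$) is precisely what the exclusion of the Lavrentiev gap is designed to \emph{establish}, so assuming it is circular. The paper avoids this by expanding around $A_1$ (the Jensen-dominated piece of $\D u_\e$) rather than $\D u$; the quadratic remainder is then handled by the Schäffner estimate, and the cross term $\int\p_z F(A_1)\cdot A_2$ is killed using the Euler--Lagrange inclusion $\p_z F(\D u)\in\LL^{q'}(\Omega)$ from Lemma \ref{lem:EulerLagrange} together with a Fenchel-conjugate argument that crucially uses the doubling condition (H6) in the controlled case and the strong convexity of $F^\ast$ in the controlled-duality case. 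Without that device your argument does not close, and simply invoking higher integrability is not a viable substitute.
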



Integrands with $(p,q)$-growth, such as the ones we consider in this paper, have been studied since the seminal papers \cite{Marcellini1989}, \cite{Marcellini1991}. There is by now an extensive literature regarding the regularity theory. We don't aim to give a complete overview of this theory here, but refer to \cite{Mingione2006,Mingione2021} for a good overview and further references. We remark that the first step in the regularity theory is to prove that minimisers of $\overline \F$ are $\WW^{1,q}$-regular. As soon as the Lavrentiev gap is excluded, this proves that minimisers of $\F$ are $\WW^{1,q}$-regular. Hence, it is useful to compare the range of $(p,q)$ in Theorem \ref{thm:noLavrentiev} to the range in available (local) $\WW^{1,q}$- regularity results. 

Under the assumptions of \ref{eq:nonautonomous}, minimisers of $\overline \F$ are $\WW^{1,q}_{\tp{loc}}$-regular and this assumption is sharp \cite{Esposito2004,Esposito2019}, see also \cite{Diening2020} regarding the sharpness. If $g\in \WW^{1+\alpha,q}(\Omega,\R^m)$ and $\Omega$ is Lipschitz, minimisers of $\overline \F$ are even $\WW^{1,q}$ regular \cite{Koch2020}.  In the setting of \ref{eq:Linftynatural} (assuming only $q<p+1$) and \ref{eq:Linftycontrolled} (assuming only $q<p+2$), $\WW^{1,q}_{\tp{loc}}$-regularity is due to \cite{Carozza2011}. Regularity in the range given in \ref{eq:Linftynatural} and \ref{eq:Linftycontrolled}, respectively, follows from combining these results with those in \cite{Esposito2004,Esposito2019}, that is corresponding to the set-up of \ref{eq:nonautonomous} with the choice $\alpha =1$. In particular, we note that in these settings we show that there is no Lavrentiev gap precisely in the regime where $\WW^{1,q}_{\tp{loc}}$-regularity is known. We remark that if the a-priori assumption on $u$ is strengthened beyond $u\in \LL^\infty$, $\WW^{1,q}_{\tp{loc}}$-regularity of minimisers can be obtained under weaker assumptions than those of \ref{eq:Linftynatural} or \ref{eq:Linftycontrolled}, c.f. \cite{deFilippis2021c}, but we do not pursue this direction here.

In the setting of \ref{eq:natural}, $\WW^{1,q}_{\tp{loc}}$-regularity of minimisers of $\overline \F$ if $1<p<\frac{np}{n-1}$ is due to \cite{Carozza2013}. If $g\in \WW^{2,q}(\Omega,\R^m)$, $\Omega$ is Lipschitz and $q<\min\left(p+1,\frac{np}{n-1}\right)$, relaxed minimisers are even $\WW^{1,q}$ regular \cite{Koch2021a}. Thus, we recover this range of $(p,q)$ in \ref{eq:natural}. $\WW^{1,q}_{\tp{loc}}$ regularity in the setting of \ref{eq:controlled} with $q<p\left(1+\min\left(\frac 2 {n-1},1\right)\right)$ is due to \cite{Schaeffner2020}. Again, we note that we recover this range of $(p,q)$ if $n\geq 3$ and $p\leq n-1$ in \ref{eq:controlled}. In the scalar case $m=1$ stronger results are known. In particular, if $\lvert z\rvert^p\lesssim F(z)\lesssim (1+\lvert z\rvert^q)$ and $F$ is convex, it suffices to assume $\frac q p\leq 1+\frac q {n-1}$ in order to ensure local boundedness of minimisers of $\overline \F$, \cite{Hirsch2020}. This restriction is sharp \cite{Marcellini1991,Hong1992}. Finally, we remark that the case of controlled-duality growth is treated in the upcoming \cite{DeFilippis2020}, assuming $q\leq\frac{np}{n-2}$. 

In order to explain our proof, it is instructive to first consider the interior case when $F\equiv F(z)$ is autonomous: Let $\{\rho_\e\}$ be a standard family of mollifiers. Given $u\in g+\WW^{1,p}_0(\Omega,\R^m)$ and $\omega\Subset\Omega$, for $\e>0$ sufficiently small, $u_\e = u\star \rho_\e\in \WW^{1,q}_{\tp{loc}}(\Omega,\R^m)$. Moreover, due to Jensens' inequality and the convexity of $F$, i.e. \eqref{def:ellipticity},
\begin{align*}
\int_\omega F(\D u_\e)\d x\leq \int_\omega F(\D u)\star \rho_\e \d x \xrightarrow{\e\to 0} \int_\omega F(\D u)\d x.
\end{align*}
Thus, in the local convex autonomous case, there is no Lavrentiev gap.

From this calculation it is clear, that there are two substantial difficulties in proving Theorem \ref{thm:noLavrentiev}. We need to adapt the mollification near the boundary and the application of Jensen's inequality needs to be justified in the non-autonomous setting. To our knowledge, Theorem \ref{thm:noLavrentiev} is the first result excluding the Lavrentiev phenomenon globally in the vectorial setting without assuming structure conditions on $F$ going beyond $p$-ellipticity and $q$-growth.

There are various approaches towards dealing with the non-autonomous setting. Condition \eqref{def:xCondition} holds for many of the commmonly considered examples, see \cite{Esposito2004,Koch2020}. It is very similar to \cite[Assumption 2.3]{Zhikov1995} and has also appeared in the context of lower semi-continuity in \cite{Acerbi1994a}. In the context of functionals with Orlicz growth, similar assumptions have been used, see for example \cite{Bulicek2022}, \cite{Harjulehto2018}.
Moreover, recently in \cite{deFilippis2022} the Lavrentiev gap has been excluded for non-autonomous integrands in the setting of Theorem \ref{thm:noLavrentiev}.\ref{eq:nonautonomous} with \eqref{def:xCondition} replaced by the assumption that the integrand $F$ can be approximated from below by integrands $F_k$ satisfying
$$
\lvert \p_z F_k(x,z)-\p_z F_k(x,z)\rvert\lesssim c_k\lvert x-y\rvert^\alpha(1+\lvert z\rvert^{p-1}).
$$
Moreover, for special cases, such as the double-phase functional ${F(x,z)=\lvert z\rvert^p+a(x)\lvert z\rvert^q}$, the equality case $q=\frac{(n+\alpha)p} n$ has been settled \cite{Balci2021}. However, at the moment there seems to be no condition available that covers all cases in which the (local) Lavrentiev phenomenon is known not to occur. Finally, we would like to contrast our results, in particular the cases \ref{eq:Linftynatural} and \ref{eq:scalarnatural} of Theorem \ref{thm:noLavrentiev} with those obtained in \cite{Bulicek2022}. The results in \cite{Bulicek2022} concern integrands with Orlicz-growth, a direction we do not pursue here. When restricted to integrands with polynomial $(p,q)$-growth, we recover the results of \cite{Bulicek2022} for a larger class of integrands $F$. Further, we remark that the results of \cite{Bulicek2022} have been extended to anisotropic functionals in \cite{Borowski2022}.

Regarding the mollification near the boundary, in this paper, we expand a technique already utilised in \cite{Koch2020}, \cite{Koch2021a}. We emphasize that in the analysis of \cite{Koch2020}, \cite{Koch2021a}, the Lavrentiev phenomenon was excluded, but no result was proven regarding the Lavrentiev gap. Moreover, only the setting of cases \ref{eq:nonautonomous} and \ref{eq:natural} in Theorem \ref{thm:noLavrentiev} was studied and stronger assumptions on the regularity of $g$ were imposed. The key idea is to use a two-parameter mollification $u\star \phi_{\e \delta(x)}$ where $\delta(x)\sim d(x,\p\Omega)$. This idea will be implemented using a Whitney-Besicovitch covering of $\Omega$. Similar ideas have been used in \cite{Fonseca2005a} in relation with studying measure representations of $\overline \F$ and more recently in \cite{Dyda2022} in order to prove density of smooth functions for weighted fractional Sobolev spaces on open sets.

The rest of this paper is structured as follows. In Section \ref{sec:prelim}, we introduce our notation and collect a number of preliminary results. In Section \ref{sec:construction}, we construct for ${u\in g+\WW^{1,p}_0(\Omega,\R^m)}$ a sequence of regular approximations $u_\e\in g+\WW^{1,q}_0(\Omega,\R^m)$. In Section \ref{sec:energies}, we prove under various assumptions that $\F(u_\e)\to \F(u)$. Finally, we prove Theorem \ref{thm:noLavrentiev} in Section \ref{sec:theoremProof}.

\section{Preliminaries}\label{sec:prelim}
Throughout $\Omega$ is a Lipschitz domain of $\R^n$. Denote by $\overline\Omega$ the closure of $\Omega$. Given $\e>0$,  set $\Omega_\e = \{x\in\Omega\colon d(x,\p\Omega)<\e\}$. For two sets $A,B\subset\R^n$ we denote $A+B =\{a+b\colon x\in A, b\in B\}$.
 We write $B_r(x)$ for the usual open Euclidean ball of radius $r$ in $\R^n$. For readability, we adopt the non-standard notation $s B_r(x) = B_{sr}(x)$ for $s>0$.
$\lvert\cdot \rvert$ denotes the Euclidean norm of a vector in $\R^n$ and likewise the Euclidean norm of a matrix $A\in \R^{n\times n}$. 

If $p\in[1,\infty]$ denote by $p^\prime=\frac{p}{p-1}$ its H\"older conjugate.
The symbols $a \sim b$ and $a\lesssim b$ mean that there exists some constant $C>0$, depending only on $n,m,p,q,\Omega,\mu,\lambda$ and $\Lambda$, and independent of $a$ and $b$ such that $C^{-1} a \leq b \leq C a$ and $a\leq C b$, respectively. 

Let $1<p\leq \infty$. We denote by $\LL^p(\Omega)=\LL^p(\Omega,\R^m)$ the usual Lebesgue space on $\Omega$. Since we do not make a regularity assumption on $\Omega$, we need to be precise in our definition of Sobolev spaces. Denote by $C^\infty(\overline\Omega)=C^\infty(\overline\Omega,\R^m)$, the set of smooth $\R^m$-valued functions in $\overline\Omega$ and by $C_c^\infty(\Omega)=C_c^\infty(\Omega,\R^m)$, the set of smooth $\R^m$-valued functions compactly supported in $\Omega$. Then we consider $\WW^{1,p}(\Omega,\R^m)=\WW^{1,p}(\Omega)$ to be the closure of $C^\infty(\overline\Omega)$ in $\Omega$ with respect to $\|\cdot\|_{\WW^{1,p}(\Omega)}$. Here $\|\cdot\|_{\WW^{1,p}(\Omega)}$ denotes the usual Sobolev norm. $\WW^{1,p}_0(\Omega)=\WW^{1,p}_0(\Omega,\R^m)$ denotes the closure of $C_c^\infty(\Omega)$ functions with respect to $\|\cdot\|_{\WW^{1,p}(\Omega)}$. For the theory of these spaces we refer to \cite{Grisvard1992}.

Whitney-Besicovitch coverings will be a key tool in our construction. They combine properties of Whitney and Besicovitch coverings and were introduced in \cite{Kislyakov2005}. A nice presentation of the theory is given in \cite{Kislyakov2013}. To be precise:
\begin{definition}
 A family of balls $\{\,B_i\,\}_{i\in I}$ is called a Whitney-Besicovitch-covering (WB-covering) of $\Omega$ if there is a triple $(\delta, M, \varepsilon)$ of positive numbers such that
 \begin{align}\label{def:WBcoverExtension}
  & \bigcup_{i\in I} B_i = \bigcup_{i\in I} (1+\delta)B_i = \Omega\\\label{def:WBcoverMultiplicity}
  & \sum_{i\in I} \chi_{(1+\delta)B_i}\leq M\\
  & B_i \cap B_j \neq \emptyset \Rightarrow \lvert B_i \cap B_j\rvert \geq \varepsilon \max(\lvert B_i\rvert,\lvert B_j\rvert).  
 \end{align}
 \end{definition}

For any open set $\Omega\subset \R^n$, a WB-covering of $\Omega$ exists due to \cite[Theorem 3.1]{Kislyakov2013}. We note that we can furthermore ensure that there is $C>0$ such that, for all $i\in I$,
\begin{align}\label{eq:compareBoundary}
\frac 1 C d(B_i,\p\Omega)\leq \lvert B_i\rvert^\frac 1 n\leq C d(B_i,\p\Omega).
\end{align}
Finally, a consequence of \eqref{def:WBcoverExtension} and \eqref{eq:compareBoundary} is that
\begin{align}\label{eq:overlappingCubes}
(1+\delta/2)B_i\cap (1+\delta/2)B_j\neq\emptyset \Rightarrow \lvert B_i\rvert\sim \lvert B_j\rvert.
\end{align}

We will require a partition of unity related to a WB-covering.
\begin{theorem}\label{thm:partition}
Let $\Omega\subset \R^n$ be open. Suppose the balls $\{B_i\}_{i\in I}$ form a WB-covering of $\Omega$. Let $B_i$ have radius $r_i$. Suppose $\{\phi_i\}_{i\in I}\subset C_c^\infty(\Omega)$ is such that
\begin{align}\label{eq:phiAss}
\supp \phi_i \subset (1+\delta/2)B_i,\quad 0\leq \phi_i \, \text{ and } \, \phi_i = 1 \text{ on } B_i
\end{align}
for all $i\in I$.
Then there is a family  family $\{\,\psi_i\,\}_{i\in I}$ of infinitely differentiable functions that form a partition of unity on $\Omega$ with the following properties:
 \begin{align}\label{eq:partitionOfUnityProp}
  &\supp (\psi_i) \subset (1+\delta/2) B_i, \qquad 0\leq \psi_i\leq 1 \quad\text{ and } \quad\, \psi_i(x) \geq \frac{1}{M} \text{ for } x\in B_i
 \end{align}
 for all $i\in I$. Further,
\begin{align*}
\lvert \D\psi_i\rvert\lesssim \sum_{\{j\colon(1+\delta/2)B_i\cap (1+\delta/2)B_j)\neq\emptyset\}} \lvert \D\phi_j\rvert.
\end{align*} 
 Moreover, it is possible to choose $\{\phi_i\}_{i\in I}$ such that
 \begin{align}\label{eq:partitionOfUnityDeriv}
   \lvert D\psi_i\rvert \leq c r_i^{-1}.
   \end{align}
\end{theorem}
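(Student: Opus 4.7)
The plan is to use the standard renormalisation trick: set
$$
\Phi(x) = \sum_{j\in I} \phi_j(x), \qquad \psi_i(x) = \frac{\phi_i(x)}{\Phi(x)},
$$
and verify the three conclusions in turn. The sum defining $\Phi$ is locally finite because $\supp\phi_j\subset(1+\delta/2)B_j\subset (1+\delta)B_j$ and the enlarged balls have bounded overlap $M$ by \eqref{def:WBcoverMultiplicity}, so $\Phi\in C^\infty(\Omega)$.

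First I would show $1\leq \Phi\leq M$ pointwise on $\Omega$. The lower bound is immediate from \eqref{eq:phiAss} and \eqref{def:WBcoverExtension}: any $x\in\Omega$ lies in some $B_i$, where $\phi_i(x)=1$. The upper bound follows from $\phi_j\leq 1$ combined with the bounded overlap of the $(1+\delta)B_j$. Hence $\psi_i$ is well defined and smooth, $\supp\psi_i\subset\supp\phi_i\subset(1+\delta/2)B_i$, and $0\leq\psi_i\leq 1$. Further $\sum_i\psi_i=1$ on $\Omega$ by construction, and for $x\in B_i$ we have $\phi_i(x)=1$ so $\psi_i(x)=1/\Phi(x)\geq 1/M$, giving \eqref{eq:partitionOfUnityProp}.

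Next I would establish the gradient estimate. Differentiating,
$$
D\psi_i = \frac{D\phi_i}{\Phi} - \frac{\phi_i\, D\Phi}{\Phi^2}.
$$
Since $\Phi\geq 1$ and $0\leq\phi_i\leq 1$, one obtains the pointwise bound $|D\psi_i|\leq |D\phi_i|+|D\Phi|$. Moreover $D\psi_i$ vanishes outside $\supp\phi_i\subset (1+\delta/2)B_i$, and at any $x\in(1+\delta/2)B_i$ only those $j$ with $x\in\supp\phi_j\subset (1+\delta/2)B_j$ contribute to $D\Phi(x)$. Such $j$ necessarily satisfy $(1+\delta/2)B_i\cap(1+\delta/2)B_j\neq\emptyset$, which yields the claimed bound
$$
|D\psi_i|\lesssim \sum_{\{j\,\colon\,(1+\delta/2)B_i\cap(1+\delta/2)B_j\neq\emptyset\}} |D\phi_j|.
$$

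Finally, for the quantitative bound \eqref{eq:partitionOfUnityDeriv}, I would choose the $\phi_i$ concretely as a rescaled mollified indicator: take a fixed radial bump $\eta\in C_c^\infty(B_{1+\delta/2}(0))$ with $\eta\equiv 1$ on $B_1(0)$ and $0\leq\eta\leq 1$, and set $\phi_i(x)=\eta((x-x_i)/r_i)$ where $B_i=B_{r_i}(x_i)$. Then $|D\phi_i|\lesssim r_i^{-1}$. By \eqref{eq:overlappingCubes} every $j$ contributing to the sum above has $r_j\sim r_i$, and by \eqref{def:WBcoverMultiplicity} there are at most $M$ such $j$, so the sum is controlled by a constant multiple of $r_i^{-1}$. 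Combining everything gives $|D\psi_i|\leq c\, r_i^{-1}$. No step here is hard; the only subtlety is invoking \eqref{eq:overlappingCubes} to convert the finite-overlap count into a bound depending only on $r_i$.
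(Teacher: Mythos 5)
Your proposal is correct and follows essentially the same route as the paper: both renormalise $\psi_i=\phi_i/\sum_j\phi_j$, use the bounded overlap \eqref{def:WBcoverMultiplicity} to get $1\leq\sum_j\phi_j\leq M$ and hence \eqref{eq:partitionOfUnityProp}, differentiate the quotient and localise the support of $D\Phi$ to obtain the sum bound, and then choose the $\phi_i$ by scaling a fixed bump so that $\lvert D\phi_i\rvert\lesssim r_i^{-1}$, combining with \eqref{eq:overlappingCubes} to conclude \eqref{eq:partitionOfUnityDeriv}. (Incidentally, the quotient-rule formula in the paper's proof has a sign typo that you have correct, but it does not affect the estimate.)
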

\begin{proof}
Suppose $\{\phi_i\}$ satisfies \eqref{eq:phiAss}. Note that then
$$
\psi_i = \frac{\phi_i}{\sum \phi_i}
$$
is a smooth partition of unity of $\Omega$ satisfying \eqref{eq:partitionOfUnityProp} due to \eqref{def:WBcoverMultiplicity}. We further note that
$$
\D\psi_i = \frac{\D \phi_i}{\sum_j \phi_j}+\frac{\phi_i}{(\sum\phi_j)^2}\sum_j \D\phi_j.
$$
In particular,
\begin{align}\label{eq:pouDeriv}
\lvert\D\psi_i\rvert\lesssim \sum_{\{j\colon (1+\delta/2)B_i\cap (1+\delta/2)B_j\neq\emptyset\}} \lvert\D\phi_j\rvert
\end{align}
as $\sum_j \phi_j\geq 1$.

Finally, due to \eqref{eq:compareBoundary}, it is possible to choose $\{\phi_i\}$ such that $\lvert \D \phi_i\rvert\lesssim \lvert B_i\rvert^{-\frac 1 n}$. \eqref{eq:pouDeriv} and \eqref{eq:overlappingCubes} then give \eqref{eq:partitionOfUnityDeriv}.
\end{proof}

We further record a number of well-known estimates concerning rate of convergence of mollifications. We fix a family $\{\,\rho_\e\,\}$ of radially symmetric, non-negative mollifiers of unitary mass. We denote convolution with $\rho_\e$ as
\begin{align*}
u\star\rho_\e(x)=\int_{\R^n} u(y)\phi_\e(x-y)\d y.
\end{align*}

\begin{lemma}\label{lem:mollification}
Let $1<p<\infty$ and $0<r\leq 1$.
Suppose $u\colon \R^n\to \R^m$. Let $B=B_r(x_0)\subset \R^n$. For $\e>0$ denote $u_\e= u\star \rho_\e$. Then, the following estimates hold (with implicit constants independent of $r$):
\begin{enumerate}
\item $\|u_\e\|_{\LL^p(B)}\lesssim \|u\|_{\LL^p(B+\e B_1(0))}$,
\item $\|u_\e\|_{\LL^\infty(B)}\lesssim \|u\|_{\LL^\infty(B+\e B_1(0))}$,
\item $\|u_\e\|_{\LL^\infty(B)}\lesssim \e^{-\frac n p}\|u\|_{\LL^p(B+\e B_1(0))}$,
\item $\|u_\e-u\|_{\LL^p(B)}\lesssim \e \|u\|_{\WW^{1,p}(B+\e B_1(0))}$,						
\end{enumerate}
Moreover,
\begin{enumerate}
\item	for $p\leq q\leq \frac{np}{n-p}$, $
	\|u_\e-u\|_{\LL^q(B)}\lesssim \e^{1+n\left(\frac 1 q-\frac 1 p\right)}\|u\|_{\WW^{1,p}(B+\e B_1(0))}^\frac p q,$\\
\item for $p\leq q<\infty$, $\|u_\e-u\|_{\LL^q(B)}\lesssim \e^\frac p q \|u\|_{\WW^{1,p}(B+\e B_1(0))}^\frac p q\|u\|_{\LL^\infty(B+\e B_1(0))}^{1-\frac p q}$.
\end{enumerate}
\end{lemma}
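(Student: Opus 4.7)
The plan is to handle all six bounds in order, progressing from raw Jensen/H\"older estimates to two interpolated inequalities. For (i), apply Jensen's inequality to the probability measure $\rho_\e(x-y)\d y$, yielding $|u_\e(x)|^p \leq (\rho_\e \star |u|^p)(x)$; Fubini together with $\supp \rho_\e \subset \overline{B_\e(0)}$ then localizes the outer integral to $B + \e B_1(0)$. Estimate (ii) is the same argument at $p = \infty$. For (iii), H\"older's inequality inside the convolution with conjugate exponents $p,p'$ gives $|u_\e(x)| \leq \nr{\rho_\e}_{\LL^{p'}(\R^n)} \nr{u}_{\LL^p(B_\e(x))}$, after which the scaling identity $\nr{\rho_\e}_{\LL^{p'}} = \e^{-n/p}\nr{\rho}_{\LL^{p'}}$ closes the bound. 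For (iv), on $C^1$ approximations (extended by density) use the identity
\[
u(x) - u(x-y) = \int_0^1 \langle \nabla u(x-ty),\, y\rangle\d t
\]
to obtain the pointwise estimate $|u_\e(x) - u(x)| \leq \e \int_0^1 \int \rho_\e(y)|\nabla u(x-ty)|\d y\d t$, and conclude by Minkowski's integral inequality combined with translation invariance of the Lebesgue measure.

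Estimates (v) and (vi) then follow by interpolation. For (vi), the triangle inequality and (ii) give the pointwise bound $|u_\e - u|^{q-p} \leq (2\nr{u}_{\LL^\infty(B+\e B_1)})^{q-p}$, so
\[
\int_B |u_\e - u|^q\d x \lesssim \nr{u}_{\LL^\infty(B+\e B_1)}^{q-p} \int_B |u_\e - u|^p\d x,
\]
and (iv) together with taking $q$-th roots produces the stated inequality. For (v), interpolate logarithmic-convexly between $\LL^p$ and $\LL^{p^*}$ with $p^* = np/(n-p)$: with $\theta = 1 + n(1/q - 1/p) \in [0,1]$,
\[
\nr{u_\e - u}_{\LL^q(B)} \leq \nr{u_\e - u}_{\LL^p(B)}^\theta \nr{u_\e - u}_{\LL^{p^*}(B)}^{1-\theta}.
\]
Bound the first factor using (iv) and the second using Sobolev's embedding $\WW^{1,p}(B+\e B_1) \hookrightarrow \LL^{p^*}(B+\e B_1)$ applied to both $u_\e$ and $u$ (absorbing the mollified gradient via (i) applied to $\nabla u$); tracking the exponents produces the claimed power of $\e$.

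The only genuine obstacles are bookkeeping: making sure that the right-hand side of every bound is posed on the enlarged ball $B + \e B_1(0)$ rather than $B$, handling the scaling of $\rho_\e$ in the various $\LL^s$ spaces, and pinning down the interpolation exponent in (v) carefully. Otherwise the lemma is a routine assembly of classical mollification estimates.
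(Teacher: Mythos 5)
Your treatment of (i)--(iv) and (vi) is exactly the standard argument the paper has in mind when it cites Evans and says ``interpolation,'' and those parts are fine. But your argument for (v) has a real gap that you do not acknowledge.

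For (v) you interpolate $\LL^q$ between $\LL^p$ and $\LL^{p^*}$ with $\theta=1+n(1/q-1/p)$, bound $\|u_\e-u\|_{\LL^p}^\theta\lesssim \e^\theta\|u\|_{\WW^{1,p}}^\theta$ by (iv), and bound $\|u_\e-u\|_{\LL^{p^*}}^{1-\theta}\lesssim \|u\|_{\WW^{1,p}}^{1-\theta}$ by Sobolev. That produces
\[
\|u_\e-u\|_{\LL^q(B)}\lesssim \e^{1+n(\frac1q-\frac1p)}\,\|u\|_{\WW^{1,p}(B+\e B_1)},
\]
i.e.\ exponent \emph{one} on the Sobolev norm, not the exponent $p/q$ appearing in the lemma. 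You write that ``tracking the exponents produces the claimed power of $\e$'' without noticing that the power of $\|u\|$ does \emph{not} come out as claimed. Indeed, the exponent $p/q$ in (v) cannot be right as stated: replacing $u$ by $\lambda u$ scales the left-hand side by $\lambda$ but the right-hand side by $\lambda^{p/q}$, so for $p<q$ and $\lambda$ large the stated inequality fails unless $u$ is constant. (By contrast (vi) is homogeneous of degree one, since $\theta+(1-\theta)=1$ there.) The paper's item (v) evidently has a typo, and your interpolation yields the scale-correct exponent $1$; you should say so rather than silently asserting agreement. Note that in the paper's applications (e.g.\ Lemma \ref{lem:xDependent}) the degree-$p$ right-hand side is recovered from the degree-$q$ version by H\"older/finite-overlap when summing over the covering, not from the lemma itself.

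A second, smaller point you file under ``bookkeeping'' is actually substantive: the Sobolev embedding $\WW^{1,p}(B_r)\hookrightarrow\LL^{p^*}(B_r)$ does \emph{not} have an $r$-independent constant. Scaling gives $\|v\|_{\LL^{p^*}(B_r)}\lesssim r^{-1}\|v\|_{\LL^p(B_r)}+\|\nabla v\|_{\LL^p(B_r)}$. Applied to $v=u_\e-u$ this contributes $r^{-1}\e\|u\|_{\WW^{1,p}}$, which is harmless only in the regime $\e\lesssim r$ (the one relevant in the paper), but the lemma as stated claims constants independent of $r$ without that restriction. You should either impose $\e\leq r$, or use a Minkowski-plus-translation argument that stays inside $\LL^p/\LL^{p^*}$ of the enlarged ball without invoking a ball-dependent Sobolev constant, and remark on which you are doing.
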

\begin{proof}
The estimates are standard. Proofs for the first part can be found in \cite{evans}. The moreover part follows by interpolation from the first part.
\end{proof}

Next, we present two key technical tools. First, we require a lemma that essentially allows us to work on spheres with regards to certain estimates. This is an observation, originally proven in \cite{Schaeffner2020}. We state a slightly modified version as in \cite{Koch2021a}:
\begin{lemma}\label{lem:schaffner}
Fix $n\geq 2$ and let $t_1>t_2\geq1$. For given $0<\rho<\sigma<\infty$ with $\sigma-\rho<1$ and $w\in L^1(B_\sigma)$, consider
\begin{align*}
J(\rho,\sigma,w) = \inf \left\{\int_{B_\sigma} \lvert w\rvert (\lvert\D \phi\rvert^{t_1}+\lvert\D \phi\rvert^{t_2})\d x\colon \phi\in C^1_0(B_\sigma),\, \phi \geq 0,\, \phi =1 \text{ in } B_\rho\right\}.
\end{align*}
Then, for every $\delta\in(0,1)$, we have
$$
J(\rho,\sigma,w)\leq (\sigma-\rho)^{-{t_1}-1/\delta}\left(\int_\rho^\sigma \left(\int_{\p B_r} \lvert w\rvert\d \sigma\right)^\delta \d r\right)^\frac {1} {\delta}.
$$
\end{lemma}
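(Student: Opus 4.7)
The plan is to reduce to an explicit one-dimensional problem by restricting attention to radial test functions. By a standard mollification argument, it suffices to construct a radial, nonnegative, Lipschitz $\phi$ with $\phi\equiv 1$ on $B_\rho$ and $\phi|_{\p B_\sigma}=0$ achieving the desired bound. Writing $\phi(x)=\eta(|x|)$ with $\eta\in \mathrm{Lip}([0,\sigma])$, $\eta\equiv 1$ on $[0,\rho]$, $\eta(\sigma)=0$, and $\eta$ monotonically decreasing, polar coordinates give
$$\int_{B_\sigma} |w|\bigl(|\D\phi|^{t_1}+|\D\phi|^{t_2}\bigr)\d x = \int_\rho^\sigma \bigl(|\eta'(r)|^{t_1}+|\eta'(r)|^{t_2}\bigr) W(r)\d r,$$
where $W(r):=\int_{\p B_r}|w|\d\sigma$, subject to $\int_\rho^\sigma|\eta'|\d r = 1$.

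Next I would pick $\eta$ optimally for the dominant $t_1$ term. A Lagrange-multiplier computation identifies $|\eta'(r)|\propto W(r)^{-1/(t_1-1)}$, suitably normalized, as the minimizer of $\int|\eta'|^{t_1}W\d r$ under the constraint $\int|\eta'|=1$. Substituting this choice and invoking the power-mean inequality on $(\rho,\sigma)$ with the normalized Lebesgue measure, using $M_{-1/(t_1-1)}(W)\leq M_\delta(W)$ (valid since $-1/(t_1-1)<0<\delta$), produces a bound of the form
$$\int_\rho^\sigma |\eta'|^{t_1} W\d r \lesssim (\sigma-\rho)^{-(t_1-1)-1/\delta} I^{1/\delta}, \qquad I := \int_\rho^\sigma W^\delta\d r.$$
Since $\sigma-\rho<1$, the right-hand side is majorised by $(\sigma-\rho)^{-t_1-1/\delta} I^{1/\delta}$, matching the target exponent.

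The main obstacle is controlling the $t_2$ term by the same right-hand side. Direct substitution of the above $\eta$ yields $\int W^{1-t_2/(t_1-1)}\d r$, for which no universal $L^\delta$-type bound is available across all $\delta\in(0,1)$. To bypass this I would crucially exploit the hypothesis $\sigma-\rho<1$ by modifying the ansatz so that $|\eta'|$ is bounded below by $(\sigma-\rho)^{-1}\geq 1$ on its support; one way is to concentrate $\eta'$ on a sublevel set $\{W\leq M\}$ of length $\gtrsim \sigma-\rho$ selected via Chebyshev applied to $W^\delta$. On such a support the pointwise inequality $|\eta'|^{t_2}\leq|\eta'|^{t_1}$ (valid whenever $|\eta'|\geq 1$ and $t_1>t_2\geq 1$) absorbs the $t_2$ term into a constant multiple of the $t_1$ term, and the latter is controlled as above. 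Assembling the two estimates, passing to the infimum and smoothing to recover $C^1_0$ regularity yields the conclusion.
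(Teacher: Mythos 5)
The paper does not actually prove this lemma; it attributes the core idea to \cite{Schaeffner2020} and cites \cite{Koch2021a} for the stated modification, so there is no in-paper proof to compare against. Your reduction to a one-dimensional monotone profile $\eta$, the polar-coordinate identity, and the Chebyshev/sublevel-set device are the right route and in spirit match those sources: take $M=\bigl(\tfrac{2}{\sigma-\rho}\int_\rho^\sigma W^\delta\,\mathrm{d}r\bigr)^{1/\delta}$ so that $E=\{r\in(\rho,\sigma):W(r)\leq M\}$ has $|E|\geq(\sigma-\rho)/2$, set $\eta'=-\chi_E/|E|$, and note $|\eta'|=1/|E|\geq 1/(\sigma-\rho)>1$ on $E$ (this is precisely where $\sigma-\rho<1$ enters), so that $|\eta'|^{t_2}\leq|\eta'|^{t_1}$ and the $t_2$-term is absorbed.

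The flaw is the final sentence, ``the latter is controlled as above.'' Once you have switched to the constant-slope ansatz on $E$, the power-mean inequality $M_{-1/(t_1-1)}(W)\leq M_\delta(W)$ from your opening paragraph no longer applies --- it was tailored to $|\eta'|\propto W^{-1/(t_1-1)}$, and Jensen in fact runs the wrong way for $\delta<1$ (the $\delta$-power mean of $W$ over $E$ is a lower bound for the plain mean, not an upper bound). The closing step you actually need is the direct estimate
$$
\int_E|\eta'|^{t_1}W\,\mathrm{d}r=|E|^{-t_1}\int_E W\,\mathrm{d}r\leq |E|^{1-t_1}M\leq 2^{t_1+1/\delta}(\sigma-\rho)^{-(t_1-1)-1/\delta}\Bigl(\int_\rho^\sigma W^\delta\,\mathrm{d}r\Bigr)^{1/\delta},
$$
using only $W\leq M$ on $E$ and $|E|\geq(\sigma-\rho)/2$; one more use of $\sigma-\rho<1$ then upgrades the exponent to $-t_1-1/\delta$. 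With this done, the entire Lagrange-multiplier and power-mean discussion is scaffolding that never enters the final bound. Finally, note that the multiplicative constant $2^{t_1+1/\delta}$ cannot be removed: for $W\equiv W_0$ (achieved by a radial $w\in L^1$) and $\sigma-\rho$ close to $1$ the infimum equals $W_0\bigl[(\sigma-\rho)^{1-t_1}+(\sigma-\rho)^{1-t_2}\bigr]$, which exceeds the stated right-hand side $W_0(\sigma-\rho)^{-t_1}$. The lemma should therefore be read with the paper's standing convention that such constants are suppressed (it is only ever invoked through $\lesssim$), so the explicit constant in your construction is not a defect of your argument but a gap in the statement as printed.
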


Second, we recall a lemma from \cite{Esposito2019}, which will be crucial for dealing with non-autonomous integrands:
\begin{lemma}\label{lem:changeOfXAlt}
Assume $1<p\leq q< \frac{(n+\alpha)p}{n}$. Suppose $F$ satisfies \eqref{def:ellipticity}, \eqref{def:naturalGrowth}, \eqref{def:xHolder} and \eqref{def:xCondition}.
Then for $x\in\Omega$ and $\e\leq \min(\e_0,d(x,\p\Omega))$,
\begin{align*}
F(x,Du(\cdot)\star \phi_\e(x))\lesssim 1 + \big(F(\cdot,Du(\cdot))\star \phi_\e\big)(x).
\end{align*}
Moreover, if $\lvert z\rvert \leq C \e^{-\frac n p}$, it holds
\begin{align*}
\sup_{y\in B_\e(x)}F(y,z)\leq 1+\inf_{y\in B_\e(x)} F(y,z)
\end{align*}
\end{lemma}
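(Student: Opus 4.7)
The plan is to prove the ``moreover'' part first and then to deduce the main inequality from it using Jensen's inequality together with \eqref{def:xCondition}.

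I would begin with the ``moreover'' statement. The Hölder assumption \eqref{def:xHolder} applied twice between the points where the sup and inf are attained (both lying in $\overline{B_\e(x)\cap\Omega}$, so at distance $\leq 2\e$) gives
\begin{align*}
\sup_{y\in B_\e(x)}F(y,z)-\inf_{y\in B_\e(x)}F(y,z)\lesssim \e^\alpha (1+\lvert z\rvert^2)^{q/2}.
\end{align*}
Next I would extract a $p$-growth lower bound on $F$ from the ellipticity \eqref{def:ellipticity}: setting $w=0$ in \eqref{def:ellipticity} and using \eqref{def:naturalGrowth} to control $F(x,0)$ and $\p_z F(x,0)$, one obtains $F(x,z)\gtrsim \lvert z\rvert^p-C$, hence $1+\inf_y F(y,z)\gtrsim (1+\lvert z\rvert)^p$. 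Hence it suffices to show $\e^\alpha(1+\lvert z\rvert)^q\lesssim (1+\lvert z\rvert)^p$ whenever $\lvert z\rvert\leq C\e^{-n/p}$; this reduces to $\e^\alpha\lvert z\rvert^{q-p}\lesssim 1$, and plugging in the hypothesis $\lvert z\rvert\leq C\e^{-n/p}$ reduces this to $n(q-p)/p\leq\alpha$, which is exactly the assumption $q<(n+\alpha)p/n$.

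For the first inequality, let $\hat y\in\overline{B_\e(x)\cap\Omega}$ be the point provided by \eqref{def:xCondition}, so that $F(\hat y,z)\leq F(y,z)$ for every $y\in\overline{B_\e(x)\cap\Omega}$ and every $z$. Since $\e\leq d(x,\p\Omega)$, the mollification integral is supported in $B_\e(x)\subset\Omega$, so by convexity of $F(\hat y,\cdot)$ (from \eqref{def:ellipticity}) and Jensen's inequality,
\begin{align*}
F(\hat y,\D u\star\phi_\e(x))\leq \int F(\hat y,\D u(y))\phi_\e(x-y)\d y\leq \int F(y,\D u(y))\phi_\e(x-y)\d y,
\end{align*}
where the second inequality is the defining property of $\hat y$ in \eqref{def:xCondition}. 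Finally, a standard estimate (Lemma \ref{lem:mollification}) shows $|\D u\star\phi_\e(x)|\lesssim \e^{-n/p}\|\D u\|_{\LL^p(\Omega)}$, so the moreover part applies with $z=\D u\star\phi_\e(x)$ to give $F(x,z)\leq 1+F(\hat y,z)$, and combining with the previous chain yields the claim.

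The only mildly delicate point is obtaining the clean $p$-growth lower bound on $F$ from \eqref{def:ellipticity}: one must control $F(x,0)$ and $\partial_z F(x,0)$, for which \eqref{def:naturalGrowth} and the convexity inequality with the difference quotient at $w=0$ are enough, but the cases $p\geq 2$ and $1<p<2$ should be distinguished to estimate $\lvert z\rvert^2(\mu^2+\lvert z\rvert^2)^{(p-2)/2}$ from below by (a multiple of) $\lvert z\rvert^p$ minus a constant. Once this is in hand, the scaling exponent $q<(n+\alpha)p/n$ falls out of the simple computation above.
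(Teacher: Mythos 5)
The paper does not prove this lemma itself but cites it from \cite{Esposito2019}, so there is no in-text proof to compare against; your task is therefore to check correctness, and your argument is sound. The three ingredients you assemble --- the $p$-growth lower bound $F(x,z)\gtrsim\lvert z\rvert^p-C$ extracted from \eqref{def:ellipticity} with $w=0$ together with \eqref{def:naturalGrowth} to control $F(x,0)$ and $\partial_z F(x,0)$, the H\"older estimate $\sup-\inf\lesssim\e^\alpha(1+\lvert z\rvert)^q$ from \eqref{def:xHolder}, and the scaling $\e^\alpha(1+\lvert z\rvert)^{q-p}\lesssim 1$ for $\lvert z\rvert\lesssim\e^{-n/p}$ which is exactly $q<(n+\alpha)p/n$ --- combine correctly to give the ``moreover'' part up to implicit constants (which is how the lemma is used in the paper, e.g.\ in Lemma~\ref{lem:A1}, where it appears inside a $\lesssim$). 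The first inequality is then obtained in the standard way: pick $\hat y$ from \eqref{def:xCondition}, apply Jensen at the fixed point $\hat y$ (valid since $\e\leq d(x,\partial\Omega)$ ensures $B_\e(x)\subset\Omega$), use the minimality of $\hat y$ under the integral sign, and finally convert $F(\hat y,\cdot)$ back to $F(x,\cdot)$ via the ``moreover'' estimate, which applies because $\lvert\D u\star\phi_\e(x)\rvert\lesssim\e^{-n/p}\nr{\D u}_{\LL^p(\Omega)}$ by Lemma~\ref{lem:mollification}. One cosmetic remark: the derived constant in the ``moreover'' conclusion is not literally $1$ but a constant depending on $C$, $\Lambda$, $\nu$, $p$, $q$, $n$, $\alpha$; the statement should be read with $\lesssim$, consistent with its use elsewhere in the paper.
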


Finally, we recall the following lemma, which is a combination of \cite[Lemma 10]{Koch2020} and \cite[Theorem 5]{Koch2022}.
\begin{lemma}\label{lem:EulerLagrange}
Let $1<p$.
Suppose $F$ satisfies \eqref{def:ellipticity} and $F(x,z)\lesssim 1+\lvert z\rvert^q$. Then, if $u\in g+\WW^{1,p}_0(\Omega)$ solves \eqref{eq:min},
\begin{align*}
\int_\Omega \p_z F(x,\D u)\cdot \D\phi\d x = 0 \quad \text{ for all } \phi\in \WW^{1,q}_0(\Omega).
\end{align*}
and
\begin{align*}
\int_\Omega \p_z F(x,\D u)\cdot(\D u-\D g)\d x\leq 0.
\end{align*}
Moreover, if $q\leq \frac{np}{n-p}$,
\begin{align*}
\p_z F(x,\D u)\in \LL^{q^\prime}(\Omega).
\end{align*}

\end{lemma}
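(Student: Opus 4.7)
The plan is to prove the three assertions in the order (iii), (i), (ii), because part (iii) provides the integrability of $\p_z F(x,\D u)$ that makes the Euler–Lagrange identities in (i)–(ii) interpretable in the Lebesgue sense. The whole argument is a Fenchel-duality bootstrap combined with the convexity inequality $F(x,z+h)-F(x,z)\geq \p_z F(x,z)\cdot h$.

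For (iii) I would exploit convex duality. Form the Legendre conjugate $F^*(x,\sigma):=\sup_z[\sigma\cdot z-F(x,z)]$; the $p$-ellipticity \eqref{def:ellipticity} gives an upper $p'$-growth bound on $F^*$, while the hypothesis $F(x,z)\lesssim 1+|z|^q$ translates to the dual coercivity $F^*(x,\sigma)\gtrsim |\sigma|^{q'}-C$. The associated Fenchel dual problem
$$\sup\left\{\int_\Omega [\sigma\cdot \D g-F^*(x,\sigma)]\d x:\sigma\in \LL^{q'}(\Omega,\R^{n\times m}),\ \div\sigma=0\right\}$$
therefore admits a maximizer $\sigma^*\in \LL^{q'}(\Omega)$. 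The restriction $q\leq np/(n-p)$ is exactly what is needed for the primal–dual pairing to be well-posed: since $u-g\in \WW^{1,p}_0(\Omega)$ embeds into $\LL^{p^*}(\Omega)$ with $p^*=np/(n-p)$, the condition $q\leq p^*$ makes $\sigma^*(u-g)\in \LL^1$ and permits the integration-by-parts identity $\int_\Omega\sigma^*\cdot \D(u-g)\d x=0$. Weak duality then becomes strong duality attained by the pair $(u,\sigma^*)$, and the Fenchel–Young equality forces $\sigma^*=\p_z F(x,\D u)$ a.e., giving (iii).

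For (i), fix $\phi\in \WW^{1,q}_0(\Omega)$; by (iii) and H\"older, $\p_z F(x,\D u)\cdot \D\phi\in \LL^1(\Omega)$. Convexity of $F(x,\cdot)$ yields the pointwise tangent-line inequality
$$\frac{F(x,\D u+t\D\phi)-F(x,\D u)}{t}\geq \p_z F(x,\D u)\cdot \D\phi,\qquad t>0,$$
with difference quotient monotone non-decreasing in $t$. Integrating, using minimality $\F(u+t\phi)\geq \F(u)$, and passing $t\to 0^+$ via monotone convergence (the $\LL^1$ minorant being supplied by (iii)) gives $\int_\Omega \p_z F(x,\D u)\cdot \D\phi\d x\geq 0$; replacing $\phi$ by $-\phi$ yields equality. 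For (ii), take the competitor $v_t:=(1-t)u+tg\in g+\WW^{1,p}_0(\Omega)$ for $t\in(0,1)$, so that $\D v_t-\D u=t(\D g-\D u)$. The Fenchel–Young identity $\p_z F(x,\D u)\cdot \D u=F(x,\D u)+F^*(x,\p_z F(x,\D u))$, combined with (iii) and the finiteness of $\F(u)$, places $\p_z F(x,\D u)\cdot \D u\in \LL^1$, while H\"older handles $\p_z F(x,\D u)\cdot \D g$. Running the same tangent-line plus monotone-convergence argument on $\F(v_t)-\F(u)\geq 0$ then produces $\int_\Omega \p_z F(x,\D u)\cdot(\D g-\D u)\d x\geq 0$, which is (ii).

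The main obstacle is the duality step. The textbook Fenchel duality argument expects $\D u\in \LL^q$ so that $\int\sigma\cdot \D u$ is Lebesgue-integrable, but here only $\D u\in \LL^p$ is a priori available; the Sobolev threshold $q\leq np/(n-p)$ must be used to reroute the pairing through $u-g\in \LL^{p^*}$ against the divergence-free stress $\sigma^*$. Establishing strong duality and identifying the maximizer as $\p_z F(x,\D u)$ a.e.\ under this minimal a priori regularity is the technical heart of the proof, and is precisely the content of \cite[Lemma~10]{Koch2020} together with \cite[Theorem~5]{Koch2022}, which the author invokes rather than reproves.
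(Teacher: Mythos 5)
The paper does not actually prove this lemma: it is presented as an import, a ``combination of \cite[Lemma 10]{Koch2020} and \cite[Theorem 5]{Koch2022},'' so there is no internal proof to compare yours against. With that caveat, your overall reading of the result --- that the real content is a Fenchel-duality argument identifying the stress $\p_z F(x,\D u)$ as the maximizer of a divergence-constrained dual problem, with the Sobolev exponent $q\leq \tfrac{np}{n-p}$ entering to make the primal--dual pairing $\int\sigma^*\cdot\D(u-g)\d x$ meaningful via $u-g\in \LL^{p^*}$ --- is faithful to the spirit of those cited works.

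However, your derivation of (i) and (ii) from (iii) has a genuine gap, and it is in fact at least as delicate as the duality step you flag as the main obstacle. You set $g_t:=t^{-1}\bigl(F(x,\D u+t\D\phi)-F(x,\D u)\bigr)$ and note that $g_t\searrow g_0:=\p_z F(x,\D u)\cdot\D\phi$ as $t\to 0^+$, with $g_0\in\LL^1$ by (iii) and H\"older, and you invoke monotone convergence ``the $\LL^1$ minorant being supplied by (iii)''. But for a \emph{decreasing} family an integrable \emph{minorant} does not license monotone convergence; one needs an integrable \emph{majorant}, i.e.\ $g_{t_0}\in\LL^1$ for some $t_0>0$. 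That fails a priori here: $F(x,\D u+t_0\D\phi)$ is controlled only by $1+|\D u|^q+|\D\phi|^q$, and $|\D u|^q$ need not be integrable when $\D u\in\LL^p$ with $p<q$. Minimality gives $\int g_t\geq 0$ for each $t>0$, but the implication ``$g_t\searrow g_0\in\LL^1$ and $\int g_t\geq 0$ $\Rightarrow$ $\int g_0\geq 0$'' is false in general (take $g_t(x)=-2+\tfrac{1}{x(1+x/t)}$ on $(0,1)$: $\int g_t=+\infty$ for every $t>0$ yet $\int g_0=-2$). Closing this gap is precisely the point of \cite{Koch2022}; a workable fix is to derive (i) and (ii) directly from the strong-duality identity you establish for (iii) (divergence-freeness of $\sigma^*$ tested against $C^\infty_c$ and extended by density to $\WW^{1,q}_0$, and the complementarity identity for (ii)), rather than running the tangent-line/MCT argument separately. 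Finally, a minor but real logical point: since you derive (i) and (ii) from (iii), your proof only establishes them under the additional hypothesis $q\leq\tfrac{np}{n-p}$, whereas the lemma as stated reserves that restriction for (iii) alone. This is harmless for the paper's applications (every invocation of the lemma sits inside that range), but it is a weakening of what is claimed.
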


\section{A simplification}
We claim that in order to prove Theorem \ref{thm:noLavrentiev}, we may make the following additional assumption:
\begin{claim}
Under the assumptions of Theorem \ref{thm:noLavrentiev}, it suffices to construct $(u_j)\subset g+\WW^{1,p}_0(\Omega)\cap \WW^{1,q}_{\tp{loc}}(\Omega)$ such that $u_j\to u$ in $\WW^{1,p}(\Omega)$.
\end{claim}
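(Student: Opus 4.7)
The plan is to reduce the admissibility requirement $v_j\in g+\WW^{1,q}_0(\Omega)$ in the definition of $\overline\F$ to the weaker $u_j\in g+\WW^{1,p}_0(\Omega)\cap \WW^{1,q}_{\tp{loc}}(\Omega)$, via a boundary cut-off. Suppose $(u_j)$ is as in the claim; the construction of Section \ref{sec:construction} together with the energy estimates of Section \ref{sec:energies} will, in addition, deliver $\F(u_j)\to \F(u)$. Pick cut-offs $\eta_k\in C_c^\infty(\Omega)$ with $\eta_k\equiv 1$ on $\Omega\setminus \Omega_{1/k}$, $0\leq\eta_k\leq 1$ and $\lvert \D\eta_k\rvert\lesssim k$, and set
\begin{equation*}
v_{j,k}:=g+\eta_k(u_j-g).
\end{equation*}
Since $\eta_k$ has compact support in $\Omega$ and $u_j-g\in \WW^{1,q}_{\tp{loc}}(\Omega)$, $\eta_k(u_j-g)\in \WW^{1,q}_0(\Omega)$, so $v_{j,k}\in g+\WW^{1,q}_0(\Omega)$.

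Next I would verify that $v_{j,k}\to u_j$ in $\WW^{1,p}(\Omega)$ as $k\to\infty$. Splitting
\begin{equation*}
\D(v_{j,k}-u_j) = (\eta_k-1)\D(u_j-g) + (u_j-g)\otimes \D\eta_k,
\end{equation*}
the first term vanishes in $\LL^p$ by dominated convergence. For the second, Hardy's inequality is available because $u_j-g\in \WW^{1,p}_0(\Omega)$, and gives
\begin{equation*}
\|(u_j-g)\D\eta_k\|_{\LL^p}^p\lesssim k^p\int_{\Omega_{1/k}}\lvert u_j-g\rvert^p\,\d x\lesssim \int_{\Omega_{1/k}} \frac{\lvert u_j-g\rvert^p}{d(\cdot,\p\Omega)^p}\,\d x\to 0.
\end{equation*}
A diagonal extraction then produces $v_j:=v_{j,k(j)}\in g+\WW^{1,q}_0(\Omega)$ with $v_j\to u$ in $\WW^{1,p}(\Omega)$, and this sequence is admissible both in the definition of $\overline \F(u)$ and as competitors for $\inf_{g+\WW^{1,q}_0}\F$.

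The main obstacle is the companion energy estimate $\F(v_{j,k})\to \F(u_j)$. Splitting the integral into $\Omega\setminus \Omega_{1/k}$ (where $v_{j,k}=u_j$) and the boundary strip and using the $q$-growth bound \eqref{def:naturalGrowth} yields
\begin{equation*}
\lvert\F(v_{j,k})-\F(u_j)\rvert\lesssim \int_{\Omega_{1/k}}\bigl(1+\lvert \D u_j\rvert^q+\lvert \D g\rvert^q\bigr)\d x + k^q\int_{\Omega_{1/k}}\lvert u_j-g\rvert^q\,\d x.
\end{equation*}
The first three pieces vanish by absolute continuity of the integral once $\F(u_j)<\infty$, but the last demands $\LL^q$-Hardy-type decay of $u_j-g$, which is strictly stronger than the $\LL^p$-Hardy that is automatic. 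This is precisely where the Whitney--Besicovitch construction of Section \ref{sec:construction} enters: the two-parameter mollification $u\star \rho_{\e\delta(x)}$, tuned to the Whitney scale via \eqref{eq:compareBoundary} and the partition of unity of Theorem \ref{thm:partition}, controls $\lvert u_j-g\rvert$ and $\lvert \D u_j\rvert$ on each dyadic boundary shell by the scale of that shell, furnishing exactly the $\LL^q$-control that the bare hypothesis $u_j-g\in \WW^{1,p}_0\cap \WW^{1,q}_{\tp{loc}}$ is missing. Granted that, the cases \ref{eq:nonautonomous}--\ref{eq:scalarnatural} of Theorem \ref{thm:noLavrentiev} follow from $\overline\F(u)\leq \liminf_j \F(v_j)=\F(u)$ combined with weak-$\WW^{1,p}$ lower semicontinuity of $\F$, while the minimiser cases \ref{eq:controlled}--\ref{eq:scalarcontrolled} follow from $\inf_{g+\WW^{1,q}_0}\F\leq \lim_j \F(v_j)=\F(u)$ together with the trivial reverse inclusion $\WW^{1,q}_0\subset \WW^{1,p}_0$.
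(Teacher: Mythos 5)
Your cut-off approach $v_{j,k}=g+\eta_k(u_j-g)$ is a natural first attempt, and you correctly identify where it breaks: the term $k^q\int_{\Omega_{1/k}}\lvert u_j-g\rvert^q\,\mathrm{d}x$ requires $\LL^q$-Hardy-type decay of $u_j-g$ near the boundary, and the hypothesis $u_j-g\in\WW^{1,p}_0(\Omega)\cap\WW^{1,q}_{\loc}(\Omega)$ gives you only $\LL^p$-Hardy. But your proposed resolution does not close this gap; it merely relocates it. You assert that the Whitney--Besicovitch construction of Section~\ref{sec:construction} ``furnishes exactly the $\LL^q$-control that the bare hypothesis is missing,'' but the whole point of the Claim is to be a \emph{reduction}: it should hold for an arbitrary sequence $(u_j)$ satisfying the stated hypotheses, so that later sections are relieved from boundary bookkeeping. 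If the argument only works for sequences already equipped with the precise boundary decay of the WB mollification, the Claim as stated is not proven, and the circularity ($\LL^q$-Hardy $\Leftrightarrow$ membership in $g+\WW^{1,q}_0$, which is the conclusion) is unresolved.

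The paper sidesteps this entirely by using a dilation rather than a cut-off. After localising to strongly star-shaped patches $\Omega_i$, it sets
\begin{align*}
u_i^s(x)=t\left(\tfrac{1}{s}(u-g)(sx)+g(x)\right),\qquad s>1,\ t=t(s)<1,
\end{align*}
with $u-g$ extended by zero outside $\Omega$. Star-shapedness means that for $x$ near $\p\Omega$ one has $sx\notin\Omega$, so $(u-g)(sx)=0$ and $u_i^s\equiv tg$ on a boundary collar; this makes $u_i^s$ $\WW^{1,q}$ near $\p\Omega$ for free, with no Hardy inequality required. The energy estimate $\F(u_i^s)\to\F(u)$ then uses convexity of $F$ in the form
\begin{align*}
F\big(t\,\D(u-g)(sx)+\D g(sx)-t\D g(sx)+t\D g(x)\big)\leq t\,F\big(\D u(sx)\big)+(1-t)F\Big(\tfrac{t}{1-t}\big(\D g(x)-\D g(sx)\big)\Big),
\end{align*}
where the second term involves only $g\in\WW^{1,q}$ and vanishes as $s\to 1$ with $t(s)$ chosen appropriately. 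The patches are glued with a partition of unity, and the resulting cross-term $\int\lvert u_i^s-u\rvert^q$ is handled by Sobolev embedding (or interpolation when $u\in\LL^\infty$), which works precisely under the stated ranges of $(p,q)$. In short: the dilation avoids the $\LL^q$-Hardy obstruction by design, whereas the cut-off runs straight into it, so the two approaches are not interchangeable here.
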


We first focus on the case where $f\equiv f(z)$. Since $\Omega$ is a Lipschitz domain, there exists a finite family of domains $x_i+\Omega_i$, $i=1,\ldots m$ such that $\Omega_i$ is strongly star-shaped with respect to $0$ and $\Omega = \cup (x+i+\Omega_i)$. We may also ensure that there exist $\omega_i\Subset\Omega_i$ with respect to $\Omega$, strongly star-shaped with respect to $x_i$ and such that still $\Omega = \cup (x_i+\omega_i)$. Extend $g$ to a $\WW^{1,q}$-function on $\R^n$ and extend $u$ by $g$ outside of $\Omega$. We want to introduce an approximation to $u$ on $x_i+\Omega_i$. After a translation, we may assume $x_i=0$. Then consider for $s>1$ and $t=t(s)<1$ to be chosen at a later point,
\begin{align*}
u^s_i(x)= t\left(\frac 1 s(u-g)(s x)+ g(x)\right).
\end{align*}
Clearly $u_i^s\to u$ in $\WW^{1,p}(\Omega_i)$ and $u^s_i = g$ on $\p\Omega$. Further $u^s_i$ is $\WW^{1,q}$ in an open neighbourhood of $(\p\omega \cap \p\Omega)$. Moreover, using the convexity of $F$,
\begin{align*}
\int_{\Omega_i} F(\D u_s^i)\d x =&\int_{\Omega_i} F\left(t\D(u-g)(s x)+\D g(s x)-t\D g(sx)+t\D g(x)\right)\d x \\
\leq& t\int_{\Omega_i} F(\D(u-g)(sx)+\D g(s x))\d x + (1-t)\int_{\Omega_i} F\left(\frac t {1-t}(\D g(x)-\D g(sx)\right)\d x.
\end{align*}
On the one hand,
\begin{align*}
\int_{\Omega_i} F(\D u(sx))\d x=& \int_{s\Omega_i\cap \Omega} F(\D u)s^{-d}\d x+ \int_{s\Omega_i\setminus(\Omega_i\cap \Omega)} F(\D g)s^{-d}\d x \to \int_{\Omega_i} F(\D u)\d x.
\end{align*}
On the other hand,
\begin{align*}
(1-t)\int_{\Omega_i} F\left(\frac t {1-t} (\D g(x)-\D g(sx)\right)\d x \lesssim \frac{t^q} {(1-t)^{q-1}} \int_{\Omega_i} \lvert \D g(x)-\D g(sx)\rvert^q\d x.
\end{align*}
Note that this tends to $0$ as $s\to 1$. In particular, we may choose $t(s)$ such that by a version of the dominated convergence theorem,
\begin{align*}
\int_{\Omega_i} F(\D u_s^i)\d x \to \int_{\Omega_i} F(\D u)\d x.
\end{align*}
Let $\{\eta_i\}$ be a smooth partition of unity such that $\mathrm{supp}\;\eta_i\subset x_i+\omega_i$. Then define,
\begin{align*}
u^{s}(x) = \eta_i u_i^s(x).
\end{align*}
Note that clearly $u^{s}(x)\to u$ in $\WW^{1,p}(\Omega)$ as $i\to\infty$, $u^{s}\in g+\WW^{1,p}_0(\Omega)$ and moreover $u^s$ is $\WW^{1,q}$ in an open neighbourhood of $\p\Omega$. Further, ensuring also that $\sup_{x\in \supp\;\eta_i} \frac{\lvert \D\eta\rvert}{(1-\eta_i)^{q-1}}\leq C<\infty$,
\begin{align*}
\int_\Omega F(\D u^s(x))\d x =& \sum_i \int_{\Omega_i} F\left(\eta_i \D u_i^s(x)+ u_i^s \otimes\D\eta_i\right)\\
\leq& \sum_i \int_{\supp\;\eta_i} \eta_i F\left(\D u_i^s(x)\right)\d x + \sum_i \int_{\supp\;\eta_i} (1-\eta_i) F\left(\frac 1 {1-\eta_i} (u_i^s-u)\otimes \D\eta_i\right)\d x\\
\leq& \sum_i \int_{\Omega_i} F(\D u_i^s)(x) \eta_i\d x + \sum_i \int_{\supp\;\eta_i} \frac{\lvert \D\eta}{(1-\eta_i)^{q-1}} \lvert u_i^s-u\lvert^q\d x\\
\lesssim& \int_\Omega F(\D u)\d x + \int_{\Omega_i} \lvert u_i^s-u\rvert^q\d x.
\end{align*}
Now if $q\leq \frac{np}{n-p}$, the second term is estimated using Sobolev embedding by
\begin{align*}
\int_{\Omega_i} \lvert u_i^s-u\rvert^q\d x\lesssim \sum_i\|u_i^s-u\|_{\WW^{1,p}(\Omega_i)}^q \to 0
\end{align*}
as $s\to 1$. In particular, by a version of dominated convergence and passing to a suitable subsequence, letting first $s\to 1$, then $t\to 1$, $\F(u^{t,s})\to \F(u)$. 

If $u\in \LL^\infty$ and $q>\frac{np}{n-p}$, we employ interpolation to estimate with $\frac 1 q = \frac{(1-\theta)np}{n-p}$ (note $\theta\in(0,1)$),
\begin{align*}
\int_{\Omega_i} \lvert u_i^s-u\rvert^q\d x\lesssim& \sum_i\|u_i^s-u\|_{\WW^{1,p}(\Omega_i)}^{1-\theta} \|u_i^s-u\|_{\LL^\infty(s^{-1}\Omega_i)}^\theta + \|u_i^s-u\|_{\WW^{1,p}(\Omega_i)}\|g\|_{\WW^{1,q}(\Omega)}\\
\lesssim& \sum_i\|u_i^s-u\|_{\WW^{1,p}(\Omega_i)}^{1-\theta} \|u\|_{\LL^\infty(\Omega_i)}^\theta \to 0\\
\end{align*}
Thus, we conclude also in this case, that $\F(u^{t,s})\to \F(u)$ after passing to a suitable subsequence, letting first $s\to 1$, then $t\to 1$.

Thus, given $u\in \WW^{1,p}(\Omega)$ and $(u_j)\subset g+\WW^{1,p}_0(\Omega)\cap \WW^{1,q}_{\tp{loc}}(\Omega)$ with $u_j\to u$ in $\WW^{1,p}(\Omega)$, we can construct a diagonal subsequence $(v_j)$ by applying the above construction to $u_j$ that satisfies $v_j\to u$ in $\WW^{1,p}(\Omega)$, $\F(v_j)\to \F(u)$ and $v_j\in \WW^{1,q}(\Omega)$. In other words, Lavrentiev does not occur.

It remains to comment on how to adapt the argument to the non-autononomous setting with $q<\frac{np}{n-p}$. The only part of the argument that changes concerns $u_i^s$. We now set for $t(s),\e(s)$ to be determined,
\begin{align*}
u_i^s(x) = t(u-g)\star \rho_\e(s x)+ g.
\end{align*}
There is $c_i>0$ such that if we ensure $\e\leq c_i s$, then $u_i^s\in \WW^{1,p}_0(\Omega)$ with $u_i^s =g$ on $\p\Omega_i\cap \p\Omega$. Moreover, $u_i^s$ is $\WW^{1,q}$ in an open neighbourhood of $\p\Omega \cap \p\Omega_i$. We find, using convexity as before,
\begin{align*}
&\int_{\Omega_i} F(x,\D u_i^s)\d x\\
\leq& t \int_{\Omega_i} F(x,\D u\star \rho_\e(sx))\d x + (1-t)\int_{\Omega_i} F\left(x,\frac t {1-t}(\D g(sx)\star \rho_\e-\D g(x)\right)\d x.
\end{align*}
The second term can be estimated exactly as before. For the first term, we note that $\lvert \D u_\e\rvert\leq C\|u\|_{\WW^{1,p}(\Omega)} \e^{-\frac n p}$ by Lemma \ref{lem:mollification}. Thus, using \eqref{def:xCondition} and Lemma \ref{lem:changeOfXAlt}, since $\e \leq c_i s$,
\begin{align*}
F(x,\D u\star \rho_\e(sx))\d x \lesssim 1+ F(s x,\D u\star \rho_\e(sx))\d x.
\end{align*}
The estimate now proceeds exactly as before.

\section{Construction and convergence of approximations}\label{sec:construction}
Throughout this section $\Omega\subset \R^n$ will be an open, bounded set and $1<p\leq q$.
Let $g\in \WW^{1,q}(\Omega)$. Given $u\in g+\WW^{1,p}_0(\Omega)$, the aim of this section is to construct $u_\e\in g+\WW^{1,p}_0(\Omega)\cap \WW^{1,p}_{\tp{loc}}(\Omega)$ such that $u_\e\to u$ in $\WW^{1,p}(\Omega)$. The sequence $\{u_\e\}$ will be at the heart of our proofs of non-existence of the Lavrentiev phenomenon.

Let $\{B_i\}$ be a WB-covering of $\Omega$, where $B_i$ has radius $r_i$, and $\{\psi_i\}$ a partition of unity adapted to $\{B_i\}$ satisfying the properties of Theorem \ref{thm:partition}, in particular also \eqref{eq:partitionOfUnityDeriv}. Without loss of generality, we will always assume that all balls in the WB-covering have radius at most $1$. Fix $u\in g+\WW^{1,p}_0(\Omega)$. Introduce for $\e\in(0,1)$,
$$
u_\e = \sum_i u\star \rho_{\delta_i \e}\,\psi_i
$$
where
$\delta_i = C_0 r_i^N$ for constants $C_0,N>0$ to be made precise at a later stage, but chosen so that $(1+\delta/2)r_i+ \delta_i\leq (1+\delta)r_i$. This can be ensured if $N\geq 1$ and $C_0 = c_0 \delta$ for a sufficiently small choice of $c_0>0$.

For later use, we record that
\begin{align}\label{eq:derivative}
\D u_\e =& \sum_i \D v_{\e}\star \rho_{\delta_i \e}\,\psi_i + \sum_i u_\e\star \rho_{\delta_i \e}\,\D \psi_i \nonumber\\
=& \sum_i \left(\D v_\e\star \rho_{\delta_i \e}\right)\,\psi_i + \sum_i (v_\e\star \rho_{\delta_i \e}-v_\e)\,\D\psi_i= A_1 + A_2,
\end{align}
since $\sum \D\psi_i = 0$.

We begin by showing that $u_\e$ does indeed lie in the correct function space.
\begin{lemma}\label{lem:ueRegularity}
For any $1\leq p\leq q$, $u_\e\in g+\WW^{1,p}_0(\Omega)\cap \WW^{1,p}_{\tp{loc}}(\Omega)$.
\end{lemma}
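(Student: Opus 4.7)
I would verify the two membership properties separately, relying crucially on the structure of the WB-covering and the compatibility between $\delta_i$ and $r_i$. For the local regularity $u_\e \in \WW^{1,p}_{\tp{loc}}(\Omega)$, the key observation is that on any compact $K \Subset \Omega$ the sum defining $u_\e$ is in fact finite. Indeed, if $B_i \cap K \neq \emptyset$, then \eqref{eq:compareBoundary} together with $d(K,\p\Omega) > 0$ force $r_i$ to be bounded below by a positive constant, and the finite multiplicity \eqref{def:WBcoverMultiplicity} combined with the bounded total volume of such balls restricts their number to finitely many. Since the choice $(1+\delta/2)r_i + \delta_i \leq (1+\delta)r_i$ ensures that each convolution $u\star\rho_{\delta_i\e}$ is smooth on a neighbourhood of $\supp\psi_i \subset (1+\delta/2)B_i$, $u_\e|_K$ is a finite sum of smooth functions, hence $u_\e \in C^\infty(\Omega)$ and a fortiori $u_\e \in \WW^{1,p}_{\tp{loc}}(\Omega)$.

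For the boundary condition, decompose $u_\e - g = (u_\e - u) + (u - g)$. Since $u - g \in \WW^{1,p}_0(\Omega)$ by assumption, it suffices to prove that $u_\e - u \in \WW^{1,p}_0(\Omega)$. To this end I would first establish the global estimate $\|u_\e - u\|_{\WW^{1,p}(\Omega)} \lesssim \e \|u\|_{\WW^{1,p}(\Omega)}$. Using $u = \sum_i u\psi_i$ and \eqref{eq:derivative},
\begin{align*}
u_\e - u &= \sum_i (u\star\rho_{\delta_i\e} - u)\psi_i, \\
\D u_\e - \D u &= \sum_i (\D u\star\rho_{\delta_i\e} - \D u)\psi_i + \sum_i (u\star\rho_{\delta_i\e} - u)\D\psi_i,
\end{align*}
where in the last identity I have added and subtracted $u\sum_i \D\psi_i = 0$. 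By the finite multiplicity of the WB-covering, each point of $\Omega$ lies in at most $M$ of the supports $\supp\psi_i$, so a pointwise Jensen-type inequality reduces the $\LL^p$-norm of each sum to a sum over $i$ of local $\LL^p$-norms. Applying Lemma \ref{lem:mollification}(iv) on each $(1+\delta)B_i$ together with $|\D\psi_i| \lesssim r_i^{-1}$ from \eqref{eq:partitionOfUnityDeriv} gives contributions of order $(\delta_i\e)^p$ and $(\delta_i\e/r_i)^p = C_0^p r_i^{(N-1)p}\e^p$ times $\|u\|^p_{\WW^{1,p}((1+\delta)B_i)}$. For $N \geq 1$ and $r_i \leq 1$ both factors are uniformly bounded, and a further use of finite multiplicity yields the claimed estimate.

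Each summand $w_i := (u\star\rho_{\delta_i\e} - u)\psi_i$ is a $\WW^{1,p}$-function with compact support inside $(1+\delta/2)B_i \Subset \Omega$ and therefore lies in $\WW^{1,p}_0(\Omega)$ by standard mollification. The estimate above, applied to tails, shows that the partial sums $\sum_{i \leq k} w_i$ are Cauchy in $\WW^{1,p}(\Omega)$ with limit $u_\e - u$, and closedness of $\WW^{1,p}_0(\Omega)$ then completes the argument. The main obstacle is the interplay between two competing scales: $|\D\psi_i| \sim r_i^{-1}$ becomes unbounded near the boundary, and it is only the compatibility $\delta_i = C_0 r_i^N$ with $N \geq 1$ that makes the commutator term $\sum_i (u\star\rho_{\delta_i\e} - u)\D\psi_i$ summable. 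Without this carefully chosen scaling, the gradient estimate for $u_\e - u$ would break down, and neither the global $\WW^{1,p}$-bound nor the boundary condition could be established.
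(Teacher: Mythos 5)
Your argument takes a genuinely different route from the paper's. The paper introduces a boundary truncation $\tilde w_s = u\,\mathbbm{1}_{\{d(\cdot,\p\Omega)\geq s\}}$, forms $w_s = \sum_i \tilde w_s\star\rho_{\delta_i\e}\psi_i$, notes (using \eqref{eq:compareBoundary}) that $w_s \in C_c^\infty(\Omega)$, and sends $s\to 0$. You instead split $u_\e - g = (u_\e - u) + (u - g)$ and reduce the boundary condition to showing $u_\e - u = \sum_i w_i$, $w_i := (u\star\rho_{\delta_i\e}-u)\psi_i$, lies in $\WW^{1,p}_0(\Omega)$, by observing each $w_i\in\WW^{1,p}_0(\Omega)$ (compact support in $(1+\delta/2)B_i\Subset\Omega$) and that the partial sums converge in $\WW^{1,p}(\Omega)$. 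Both approaches are sound; yours avoids the auxiliary truncation parameter $s$ and works directly with the decomposition along the WB-covering, at the price of handling an infinite series rather than a single explicit $C_c^\infty$ function. Your finite-sum observation for $\WW^{1,p}_{\tp{loc}}$ (indeed $C^\infty_{\tp{loc}}$) regularity is the same as the paper's.

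One intermediate claim you make is, however, not correct: the rate estimate $\|u_\e - u\|_{\WW^{1,p}(\Omega)}\lesssim\e\|u\|_{\WW^{1,p}(\Omega)}$. The term $\sum_i(\D u\star\rho_{\delta_i\e}-\D u)\psi_i$ does not carry a factor of $\e$. Lemma \ref{lem:mollification}(iv) gives $\|v\star\rho_\e - v\|_{\LL^p}\lesssim \e\|v\|_{\WW^{1,p}}$; applying it to $v=\D u$ would require control of second derivatives of $u$, which is not available. All one gets is the crude bound $\|\D u\star\rho_{\delta_i\e}-\D u\|_{\LL^p((1+\delta/2)B_i)}\lesssim\|\D u\|_{\LL^p((1+\delta)B_i)}$, with no small factor; the paper itself confronts this in Lemma \ref{lem:ueConvergence}, where the $A_1$ bound has no $\e$ and convergence is obtained only via dominated convergence. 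Fortunately your Cauchy argument does not need the rate. What it actually requires is
\[
\sum_i\|w_i\|^p_{\WW^{1,p}(\Omega)}\lesssim\sum_i\|u\|^p_{\WW^{1,p}((1+\delta)B_i)}\lesssim\|u\|^p_{\WW^{1,p}(\Omega)}<\infty,
\]
using $\delta_i\e/r_i\lesssim\e\leq 1$ for the $\D\psi_i$ term and \eqref{def:WBcoverMultiplicity} to sum. Combined with the finite-multiplicity Jensen bound $\bigl\|\sum_{i>k}w_i\bigr\|^p_{\WW^{1,p}(\Omega)}\leq M^{p-1}\sum_{i>k}\|w_i\|^p_{\WW^{1,p}(\Omega)}$, the tails vanish and the partial sums converge in $\WW^{1,p}(\Omega)$, so closedness of $\WW^{1,p}_0(\Omega)$ finishes the proof. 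Thus the structure is right; simply drop the erroneous $\e$-rate and replace it by absolute $p$-summability of $\{\|w_i\|_{\WW^{1,p}}\}_i$.
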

\begin{proof}
Consider for $s>0$ the function
$$
w_s = \sum_i \tilde w_s \star \rho_{\delta_i \e}\,\psi_i
$$
where
$$
\tilde w_s(x) = \begin{cases}
			0 \qquad&\text{ if } d(x,\p\Omega)<s\\
			u &\text{ else }
			\end{cases}
$$
Note that locally $w_s$ is a finite sum of smooth functions, compactly supported in $\Omega$. Moreover, due to \eqref{eq:compareBoundary}, $w_s=0$ in an open neighbourhood of $\p\Omega$. In particular, this shows $w_s\in C_c^\infty(\Omega)$. Thus, to prove the claim it suffices to show that $w_s \to u_\e-g$ in $\WW^{1,p}_0(\Omega)$ as $s\to 0$. Note that for $s$ sufficiently small, we can guarantee that $B_i\setminus \Omega_\e\neq\emptyset$ implies $B_i\cap \Omega_s = \emptyset$, due to \eqref{eq:compareBoundary}. Moreover, note that due to \eqref{eq:compareBoundary}, if $B_i\cap\Omega_s\neq\emptyset$, then $r_i\lesssim s$. Thus, for all sufficiently small choices of $s$, using Lemma \ref{lem:mollification} and \eqref{def:WBcoverMultiplicity}, there is $c>0$ such that
\begin{align*}
\int_\Omega \lvert u_\e-g-w_s\rvert^q\d x \leq& \sum_i\int_{(1+\delta/2)B_i\cap \Omega_s} \lvert g\star \rho_{\delta_i \e}-g\rvert^q\d x\\
\lesssim& \sum_i \int_{(1+\delta)B_i\cap \Omega_{cs}}\lvert g\rvert^q\d x\\
\lesssim& \|g\|_{\WW^{1,q}(\Omega_{cs})}\xrightarrow{s\to 0} 0.
\end{align*}

We further find, using \eqref{eq:derivative}, Lemma \ref{lem:mollification} and \eqref{def:WBcoverMultiplicity}, for all sufficiently small $s$ and some $c>0$,
\begin{align*}
\int_\Omega \lvert \D u_\e-\D w_s\rvert^p\d x \leq& \sum_i \int_{(1+\delta/2)B_i\cap\Omega_s} \lvert \D u\star \rho_{\delta_i \e}-\D u\rvert^p+\lvert u\star \rho_{\delta_i\e}-g\rvert^p \lvert \D\psi_i\rvert^p\d x\\
\lesssim& \sum_i \int_{(1+\delta)B_i\cap \Omega_{cs}} \lvert \D u\rvert^p+(\delta_i\e)^p r_i^p \left(\lvert g\rvert^p+\lvert \D g\rvert^p\right)\d x\\
\lesssim& \|u\|_{\WW^{1,p}(\Omega_{cs})}\xrightarrow{s\to 0} 0.
\end{align*}
\end{proof}

Next, we turn to the convergence properties of $\{u_\e\}$ as $\e\to 0$. 
\begin{lemma}\label{lem:ueConvergence}
Suppose $1<p\leq q$. Then $u_\e\to u$ in $\WW^{1,p}(\Omega)$ as $\e\to 0$. 
\end{lemma}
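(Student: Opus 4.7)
The plan is to decompose $u_\e - u$ and $\D u_\e - \D u$ according to the partition of unity $\{\psi_i\}$ and to estimate each contribution using Lemma \ref{lem:mollification} together with the bounded-multiplicity property \eqref{def:WBcoverMultiplicity} of the WB-covering.

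For the $\LL^p$-convergence of $u_\e - u$, I would write $u_\e - u = \sum_i (u\star\rho_{\delta_i\e} - u)\,\psi_i$ and, using that at each point at most $M$ summands are non-zero (so that a discrete Jensen's inequality applies), raise to the $p$-th power to obtain
$$
\|u_\e - u\|_{\LL^p(\Omega)}^p \lesssim \sum_i \int_{(1+\delta/2)B_i} |u\star\rho_{\delta_i\e} - u|^p \d x.
$$
The choice of $\delta_i$ ensures $(1+\delta/2)B_i + \delta_i\e B_1(0) \subset (1+\delta)B_i$, so Lemma \ref{lem:mollification}(iv) bounds each summand by $(\delta_i\e)^p \|u\|_{\WW^{1,p}((1+\delta)B_i)}^p$. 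Since $\delta_i \leq C_0$ (using $r_i\leq 1$ and $N\geq 1$), summing and invoking \eqref{def:WBcoverMultiplicity} on the enlarged balls yields $\|u_\e - u\|_{\LL^p(\Omega)}^p \lesssim \e^p \|u\|_{\WW^{1,p}(\Omega)}^p \to 0$.

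For the gradient, I would use the decomposition \eqref{eq:derivative} and write $\D u_\e - \D u = (A_1 - \D u) + A_2$, where $\sum_i\D\psi_i=0$ has been used to rewrite $A_2 = \sum_i (u\star\rho_{\delta_i\e}-u)\D\psi_i$. The term $A_2$ is handled as above, now picking up an extra factor $|\D\psi_i|^p \lesssim r_i^{-p}$ from \eqref{eq:partitionOfUnityDeriv}. Since $\delta_i = C_0 r_i^N$ with $N\geq 1$ and $r_i\leq 1$, the combined factor $(\delta_i\e)^p r_i^{-p}$ is $\lesssim \e^p$, hence $\|A_2\|_{\LL^p(\Omega)}^p \lesssim \e^p\|u\|_{\WW^{1,p}(\Omega)}^p \to 0$.

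The main obstacle is the term $A_1 - \D u = \sum_i (\D u\star\rho_{\delta_i\e} - \D u)\psi_i$: since $\D u$ is merely in $\LL^p$, no quantitative rate of the form given by Lemma \ref{lem:mollification}(iv) is at hand. I would still bound, as before,
$$
\|A_1 - \D u\|_{\LL^p(\Omega)}^p \lesssim \sum_i \int_{(1+\delta/2)B_i} |\D u\star\rho_{\delta_i\e} - \D u|^p \d x,
$$
and conclude by dominated convergence on the counting measure: for each fixed $i$, $\delta_i>0$ is fixed and the mollification is well-defined on $(1+\delta/2)B_i$ since $(1+\delta/2)B_i + \delta_i\e B_1(0) \subset \Omega$, so the summand tends to zero as $\e\to 0$ by the standard $\LL^p$-convergence of mollifications; meanwhile, by Young's inequality, the summand is dominated uniformly in $\e$ by $\lesssim \|\D u\|_{\LL^p((1+\delta)B_i)}^p$, and these dominants are summable with sum $\lesssim M \|\D u\|_{\LL^p(\Omega)}^p$ by \eqref{def:WBcoverMultiplicity}. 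Passing to the limit term-by-term under the sum yields $\|A_1 - \D u\|_{\LL^p(\Omega)}\to 0$, completing the argument.
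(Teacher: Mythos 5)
Your proof is correct and rests on the same technique as the paper's: decompose via the WB-covering and the partition of unity, invoke the rates from Lemma \ref{lem:mollification}, and sum using the bounded multiplicity \eqref{def:WBcoverMultiplicity}. The differences are matters of presentation rather than substance. For the $\LL^p$-estimate of $u_\e-u$, the paper splits $\Omega$ into the boundary collar $\Omega_\e$ (where it only uses that $\|u\|_{\LL^p(\Omega_\e)}$ and $\|g\|_{\LL^p(\Omega_{c\e})}$ vanish as $\e\to 0$) and the interior $\Omega\setminus\Omega_\e$ (where it uses the quantitative rate $\delta_i\e$); you apply the rate uniformly, which works equally well. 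The more substantive difference is in the gradient term: the paper merely records the uniform bound $\|A_1\|_{\LL^p(\Omega)}\lesssim\|\D u\|_{\LL^p(\Omega)}+\|\D g\|_{\LL^p(\Omega)}$ and then appeals to ``a variant of the dominated convergence theorem'' without spelling out the passage to $A_1\to\D u$ in $\LL^p$. Your argument fills this in explicitly and correctly: since $\D u$ is only $\LL^p$, no rate is available, but dominated convergence with respect to the counting measure over $i$ (each fixed-$i$ summand vanishes since $\delta_i>0$ is fixed, while the summands are dominated by the summable sequence $\|\D u\|_{\LL^p((1+\delta)B_i)}^p$) gives $\|A_1-\D u\|_{\LL^p(\Omega)}\to 0$. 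This is the cleanest way to make the paper's appeal to dominated convergence precise.
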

\begin{proof}
We note
\begin{align*}
\|u-u_\e\|_{\LL^p(\Omega)} \leq& \sum_{i} \|u-u\star \rho_{\delta_i \e}\|_{\LL^p((1+\delta/2)B_i\setminus \Omega_\e)}+\|u-g\star \rho_{\delta_i\e}\|_{\LL^p((1+\delta/2)B_i\cap\Omega_\e}.
\end{align*}
Due to \eqref{def:WBcoverMultiplicity} and Lemma \ref{lem:mollification},
\begin{align*}
\sum_{i} \|u-u\star \rho_{\delta_i \e}\|_{\LL^p((1+\delta/2)B_i\setminus\Omega_\e)}\lesssim& \sum_{i} \delta_i \e \|u\|_{\WW^{1,p}((1+\delta)B_i)}\\
\lesssim& \e\|u\|_{\WW^{1,p}(\Omega)}\xrightarrow{\e\to 0} 0.
\end{align*}
Further, using \eqref{def:WBcoverMultiplicity} and \eqref{eq:compareBoundary}, and noting that due to \eqref{eq:compareBoundary}, if $B_i\cap\Omega_\e\neq\emptyset$, $r_i\lesssim \e$,
\begin{align*}
\sum_{i} \|u-g\star \rho_{\delta_i\e}\|_{\LL^p((1+\delta/2)B_i\cap\Omega_\e)}
\lesssim \|u\|_{\LL^p(\Omega_\e)}+\|g\|_{\LL^p(\Omega_{c\e})} \xrightarrow{\e\to 0} 0,
\end{align*}
for some $c>0$. The convergence holds since $u,g\in \WW^{1,p}(\Omega)$. In particular, we note that ${u_\e\to u\in \LL^p(\Omega)}$ and hence almost everywhere.

We now turn to establishing gradient estimates. We begin by estimating $A_1$ (recall the definition of $A_1$ in \eqref{eq:derivative}). Using \eqref{def:WBcoverMultiplicity} and Lemma \ref{lem:mollification}, we obtain
\begin{align*}
\|A_1\|_{\LL^p(\Omega)}\leq& \sum_{i} \|\D u_\e\star \rho_{\delta_i \e}\|_{\LL^p((1+\delta/2)B_i)}\lesssim \sum_{i}\|\D u_\e\|_{\LL^p((1+\delta)B_i)}\\
\lesssim& \|\D g\|_{\LL^{p}(\Omega)}+\|\D  u\|_{\LL^{p}(\Omega)}.
\end{align*}
Further, using Lemma \ref{lem:mollification} and \eqref{def:WBcoverMultiplicity},
\begin{align*}
\|A_2\|_{\LL^p(\Omega)}\leq& \sum_{i}\|\left(u_\e\star \rho_{\delta_i \e}-u_\e\right)\D\psi_i\|_{\LL^p((1+\delta/2)B_i)}\\
\lesssim& \sum_{i}r_i^{-1} \|\left(u_\e\star \rho_{\delta_i\e}-u_\e\right)\|_{\LL^p((1+\delta/2)B_i)}\\
\lesssim& \sum_{i} r_i^{-1}\delta_i \e \|u_\e\|_{\WW^{1,p}((1+\delta)B_i)}\\
\lesssim& \e\left(\|u\|_{\WW^{1,p}(\Omega)}+\|g\|_{\WW^{1,p}(\Omega)}\right).
\end{align*}
To obtain the last line, we also used that $N\geq \frac 1 n$.
Thus, using \eqref{eq:derivative} and a variant of the dominated convergence theorem, we deduce that $u_\e\to u$ in $\WW^{1,p}(\Omega)$.
\end{proof}

\section{Convergence of energies}\label{sec:energies}
In this section, we will show that in each of the scenarios of Theorem \ref{thm:noLavrentiev}, we have $\F(u_\e)\to \F(u)$. We establish this by showing ${\int_\Omega F(x,A_1)\d x \to \int_\Omega F(x,\D u)\d x}$, as well as $\int_\Omega F(x,\D u)-F(x,A_1)\d x \to 0$ as $\e\to 0$.

Throughout this section, we use the notation of Theorem \ref{thm:noLavrentiev} and assume that $\Omega$ is an open, bounded set, $1< p\leq q$ and $g\in \WW^{1,q}(\Omega)$.

Lemma \ref{lem:changeOfXAlt} is the key tool in proving convergence of $\int_\Omega F(x,A_1)\d x$.
\begin{lemma}\label{lem:A1}
Let $1\leq p\leq q< \frac{(n+\alpha)p} n$. Suppose $F$ satisfies \eqref{def:ellipticity}, \eqref{def:naturalGrowth}, \eqref{def:xHolder} and \eqref{def:xCondition}. 
Then
\begin{align*}
\int_\Omega F(x,A_1) \to \int_\Omega F(x,\D u).
\end{align*}
\end{lemma}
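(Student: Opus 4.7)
The plan is to establish the pointwise bound $F(x, A_1(x)) \lesssim 1 + G_\e(x)$, where $G_\e(x) := \sum_i \psi_i(x)\, (F(\cdot, \D u) \star \rho_{\delta_i \e})(x)$, and then to combine it with a.e.\ convergence $A_1(x) \to \D u(x)$ and $\LL^1$-convergence $G_\e \to F(\cdot, \D u)$ through a generalized (Pratt-type) dominated convergence argument.

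First, I would exploit the fact that $\sum_i \psi_i = 1$ with $\psi_i \geq 0$, together with the convexity of $F(x,\cdot)$ coming from \eqref{def:ellipticity}, so that Jensen's inequality yields
$$F(x, A_1(x)) \leq \sum_i \psi_i(x)\, F\bigl(x,\, (\D u \star \rho_{\delta_i \e})(x)\bigr).$$
For $x \in \supp \psi_i \subset (1+\delta/2)B_i$, the construction $\delta_i = C_0 r_i^N$ with $N\geq 1$ combined with \eqref{eq:compareBoundary} ensures the scale compatibility $\delta_i \e \lesssim r_i \lesssim d(x, \p\Omega)$, and also $\delta_i \e \leq \e_0$ for $\e$ small. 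Hence Lemma \ref{lem:changeOfXAlt} applies pointwise and gives
$$F\bigl(x,\,(\D u \star \rho_{\delta_i \e})(x)\bigr) \lesssim 1 + (F(\cdot, \D u) \star \rho_{\delta_i \e})(x),$$
from which the claimed pointwise bound follows after using $\sum_i \psi_i = 1$ once more.

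Next, I would prove $\int_\Omega G_\e(x) \d x \to \int_\Omega F(x, \D u) \d x$. By Fubini,
$$\int_\Omega G_\e(x) \d x = \int_\Omega F(y, \D u(y)) \sum_i (\psi_i \star \rho_{\delta_i \e})(y) \d y.$$
Each $\psi_i \star \rho_{\delta_i \e}$ is bounded by $1$ and supported in $(1+\delta)B_i$ (by the choice of $\delta_i$), so the sum is pointwise bounded by the multiplicity $M$ from \eqref{def:WBcoverMultiplicity}; it converges a.e.\ to $\sum_i \psi_i = 1$ as $\e\to 0$. Since $F(\cdot, \D u) \in \LL^1(\Omega)$ by \eqref{def:naturalGrowth}, ordinary dominated convergence gives the desired limit.

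Finally, at every Lebesgue point $x$ of $\D u$, each of the at most $M$ terms with $\psi_i(x) \neq 0$ satisfies $(\D u \star \rho_{\delta_i \e})(x) \to \D u(x)$ as $\e \to 0$, hence $A_1(x) \to \D u(x)$ and $F(x, A_1(x)) \to F(x, \D u(x))$ a.e.\ by continuity of $F$ in its second argument. The proof then closes via the generalized dominated convergence argument: Fatou applied to the nonnegative sequence $C(1+G_\e) - F(\cdot, A_1)$ yields $\limsup \int_\Omega F(\cdot, A_1) \leq \int_\Omega F(\cdot, \D u)$, while Fatou applied directly to $F(\cdot, A_1)$ gives the matching liminf. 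I expect the main technical obstacle to be keeping track of the scale compatibility $\delta_i \e \lesssim d(\cdot, \p\Omega)$ uniformly in $i$ and $\e$, which ties the exponent $N$ to the geometry of the WB-covering; the non-autonomous change-of-$x$ step is absorbed into Lemma \ref{lem:changeOfXAlt}, which is the reason the restriction $q < \tfrac{(n+\alpha)p}{n}$ appears.
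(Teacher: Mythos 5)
Your proposal is correct and follows essentially the same strategy as the paper: Jensen's inequality via the convexity in \eqref{def:ellipticity}, the change-of-$x$ bound from Lemma~\ref{lem:changeOfXAlt}, and then a generalized dominated-convergence argument. Your write-up is actually somewhat more explicit than the paper's on the final convergence step: the paper only records the bound $\int_\Omega F(x,A_1)\,\mathrm{d}x\lesssim 1+\int_\Omega F(x,\D u)\,\mathrm{d}x$ and invokes ``a variant of dominated convergence,'' whereas you spell out the Pratt-type argument (Fatou applied to $C(1+G_\e)-F(\cdot,A_1)\geq 0$ together with a.e.\ convergence of $A_1\to\D u$), which is precisely what is needed to convert that upper bound into convergence of the integrals. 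Your use of Fubini to shift the mollifier onto $\psi_i$ and identify $\int_\Omega G_\e\to\int_\Omega F(\cdot,\D u)$ is a clean variant; the paper instead splits the integration domain into $\Omega\setminus\Omega_\e$ and $\Omega_\e$ (comparing with $\D u$ and $\D g$ respectively) and uses the finite multiplicity \eqref{def:WBcoverMultiplicity} to reach the same conclusion. Two small points of care, both of which you flag: (a) the application of Lemma~\ref{lem:changeOfXAlt} at scale $\delta_i\e$ requires $\delta_i\e\leq\min(\e_0,d(x,\p\Omega))$ for $x\in\supp\psi_i$, which is exactly what the constraint $(1+\delta/2)r_i+\delta_i\leq(1+\delta)r_i$ together with \eqref{eq:compareBoundary} is designed to guarantee; and (b) the statement $F(\cdot,\D u)\in\LL^1(\Omega)$ does not follow from \eqref{def:naturalGrowth} alone (which would need $\D u\in\LL^q$), but it is harmless since \eqref{def:ellipticity} bounds $F$ from below, so the only interesting case is $\int_\Omega F(\cdot,\D u)\,\mathrm{d}x<\infty$, and then the Fubini/Scheff\'e step goes through.
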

\begin{proof}
Note that, due to convexity of $F$ and Lemma \ref{lem:changeOfXAlt},
\begin{align*}
\int_\Omega F(x,A_1)\leq& \sum_{i} \int_{(1+\delta/2)B_i} F(x,\D u_\e\star \rho_{\delta_i \e})\psi_i\d x\\
\lesssim& 1+\sum_{i} \int_{(1+\delta/2)B_i} F(x,\D u_\e)\star \rho_{\delta_i \e}\d x\\
=& 1+\sum_{i} \int_{(1+\delta)B_i\setminus \Omega_\e} F(x,\D u)\star \rho_{\delta_i\e} \d x\\
&\quad+ \sum_{i} \int_{(1+\delta)B_i\cap\Omega_\e} F(x,\D g)\star \rho_{\delta_i\e}\d x\\
\lesssim& 1+\int_\Omega F(x,\D u)\d x.
\end{align*}
To obtain the last line, we used \eqref{def:WBcoverMultiplicity} and noted that using \eqref{eq:compareBoundary} and \eqref{def:WBcoverMultiplicity} for some $c>0$,
$$
\sum_{i}\int_{(1+\delta)B_i\cap\Omega_\e} F(\D g)\star \rho_{\delta_i\e}\d x\lesssim \int_{\Omega_{c\e}} F(\D g)\d x\xrightarrow{\e\to 0} 0,
$$ 
since $F(\D g)\in \LL^1(\Omega)$ due to \eqref{def:naturalGrowth}. By a variant of the dominated convergence theorem, the conclusion follows.
\end{proof}

\begin{remark}
Note that Lemma \ref{lem:A1} applies without restriction on $p\leq q$ whenever $F$ is autonomous, that is $F\equiv F(z)$.
\end{remark}

We now turn towards proving convergence of $\{u_\e\}$ in the non-autonomous case.
\begin{lemma}\label{lem:xDependent}
Let $1< p\leq q< \frac{(n+\alpha)p} n$. Suppose $F$ satisfies \eqref{def:ellipticity}, \eqref{def:naturalGrowth}, \eqref{def:xHolder} and \eqref{def:xCondition}. Then
\begin{align*}
\int_\Omega F(x,\D u_\e) \to \int_\Omega F(x,\D u).
\end{align*}
\end{lemma}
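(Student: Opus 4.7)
The plan is to build on Lemma~\ref{lem:A1}, which already establishes $\int_\Omega F(x,A_1)\,\d x \to \int_\Omega F(x,\D u)\,\d x$. It therefore suffices to prove that $\int_\Omega |F(x,\D u_\e)-F(x,A_1)|\,\d x \to 0$. Combining \eqref{eq:diffF} with the triangle inequality $|\D u_\e|\leq |A_1|+|A_2|$ gives
\begin{align*}
|F(x,\D u_\e)-F(x,A_1)|\lesssim |A_2|(1+|A_1|+|A_2|)^{q-1}\lesssim |A_2|+|A_2|\,|A_1|^{q-1}+|A_2|^q,
\end{align*}
so the task reduces to showing that each of $\int|A_2|$, $\int|A_2|^q$ and $\int|A_2|\,|A_1|^{q-1}$ tends to zero.

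The first term is immediate: Lemma~\ref{lem:ueConvergence} gives $\|A_2\|_{\LL^p}\to 0$, hence $\|A_2\|_{\LL^1}\to 0$ on the bounded domain $\Omega$. For the $\LL^q$-estimate on $A_2$, Lemma~\ref{lem:mollification}(v) applied on each ball $(1+\delta/2)B_j$, together with $|\D\psi_j|\lesssim r_j^{-1}$ and \eqref{eq:overlappingCubes}, yields
\begin{align*}
\|A_2\|_{\LL^q((1+\delta/2)B_j)}\lesssim r_j^{-1}(\delta_j\e)^{\gamma_q}\|u\|_{\WW^{1,p}((1+\delta)B_j)}^{p/q},\qquad \gamma_q:=1+n\left(\frac{1}{q}-\frac{1}{p}\right).
\end{align*}
The hypothesis $q<(n+\alpha)p/n$ forces $q<np/(n-p)$, so $\gamma_q>0$. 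Selecting $N$ in $\delta_j=C_0 r_j^N$ large enough that $N\gamma_q\geq 1$ cancels the singular power $r_j^{-1+N\gamma_q}$, and summing over $j$ via \eqref{def:WBcoverMultiplicity} produces $\|A_2\|_{\LL^q(\Omega)}\lesssim \e^{\gamma_q}\|u\|_{\WW^{1,p}(\Omega)}^{p/q}\to 0$.

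The mixed term is handled by H\"older's inequality with exponents $\theta=p/(p-q+1)$ and $\theta'=p/(q-1)$, so that $(q-1)\theta'=p$:
\begin{align*}
\int_\Omega |A_2|\,|A_1|^{q-1}\,\d x\leq \|A_2\|_{\LL^\theta(\Omega)}\|A_1\|_{\LL^p(\Omega)}^{q-1}.
\end{align*}
The uniform $\LL^p$-bound $\|A_1\|_{\LL^p}\lesssim 1$ follows from an argument analogous to the estimate for $A_1$ in Lemma~\ref{lem:ueConvergence}. For $\theta$ to be well-defined with $p\leq \theta\leq np/(n-p)$ one needs $q<\min(p+1,(n+1)p/n)$; both conditions are implied by $q<(n+\alpha)p/n$ when $p\leq n$. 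Moreover $\theta\geq q$, hence $\gamma_\theta:=1+n(1/\theta-1/p)\leq \gamma_q$, so a single choice $N\geq 1/\gamma_\theta$ handles the $\LL^\theta$ and $\LL^q$ scalings simultaneously, and a repetition of the previous argument delivers $\|A_2\|_{\LL^\theta(\Omega)}\lesssim \e^{\gamma_\theta}\to 0$.

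The principal obstacle is the joint calibration of the exponent $N$ in the definition of $\delta_j$ and of the H\"older exponent $\theta$, chosen so that the error bounds cancel the singular factor $r_j^{-1}$ coming from $\D\psi_j$ while remaining summable over the Whitney-Besicovitch cover; this is where $q<(n+\alpha)p/n$ enters sharply. The borderline regime $p>n$ (in which $q\geq p+1$ could occur) can be addressed separately by exploiting $u\in\WW^{1,p}\hookrightarrow\LL^\infty$ and replacing Lemma~\ref{lem:mollification}(v) by its interpolation-based companion Lemma~\ref{lem:mollification}(vi).
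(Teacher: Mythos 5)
Your overall architecture is correct and parallels the paper: reduce to Lemma~\ref{lem:A1} via \eqref{eq:diffF}, show $\|A_2\|_{\LL^q}\to 0$, and handle the mixed term. The estimate for $\int|A_2|^q$ reproduces the paper's computation exactly (the final rate $\e^{q+n-nq/p}$ and the constraint $q<np/(n-p)$, vacuous for $p\geq n$, agree). Where you genuinely diverge is the mixed term. The paper applies H\"older \emph{locally} on each pair of overlapping balls with exponents $(q,q')$, pairs $\|v_\e\star\rho_{\delta_i\e}-v_\e\|_{\LL^q(B_i)}$ against $\|\D v_\e\star\rho_{\delta_j\e}\|_{\LL^q(B_j)}^{q-1}$, and controls the latter by the $\LL^\infty$--$\LL^p$ interpolation bound from Lemma~\ref{lem:mollification}. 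Summing over the cover yields the rate $\e^{n+1-nq/p}$ under the sole constraint $q<(n+1)p/n$. You instead use a \emph{global} H\"older with exponents $\theta=p/(p-q+1)$ and $p/(q-1)$, bounding $\|A_1\|_{\LL^p}$ uniformly and estimating $\|A_2\|_{\LL^\theta}$ by the covering argument. This is cleaner and, because $1/\theta=(p-q+1)/p$, gives $\gamma_\theta=1+n/\theta-n/p=n+1-nq/p$, i.e.\ exactly the same rate. So for the regime where $\theta$ is admissible your route is a legitimate and slightly more transparent alternative.

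The gap is the regime $p>n/\alpha$. Your H\"older split needs $\theta=p/(p-q+1)\in(0,\infty)$, i.e.\ $q<p+1$, and this does \emph{not} follow from $q<(n+\alpha)p/n=p+\alpha p/n$ when $\alpha p/n>1$. You notice this, but the proposed fix does not close it: passing to $u\in\LL^\infty$ (valid for $p>n$ by Morrey) and replacing Lemma~\ref{lem:mollification}(v) by the interpolation bound (vi) still produces the estimate
\begin{align*}
\int_\Omega|A_2|\,|A_1|^{q-1}\d x\lesssim\|A_2\|_{\LL^{p/(p+1-q)}(\Omega)}\|A_1\|_{\LL^p(\Omega)}^{q-1},
\end{align*}
which still requires $q<p+1$ for the exponent to be meaningful; this is in fact the same constraint that appears in the proof of Lemma~\ref{lem:Linfty}. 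The $\LL^\infty$ information improves the decay rate of $\|A_2\|$ but does not repair the H\"older split itself. To cover $p>n/\alpha$ you would have to abandon the global H\"older decoupling and, as the paper does, estimate $\int_{(1+\delta/2)B_i}|A_2|\,|A_1|^{q-1}\d x$ ball by ball with exponents $(q,q')$ so that the local $\LL^\infty$ bound $\|\D v_\e\star\rho_{\delta_j\e}\|_{\LL^\infty}\lesssim(\delta_j\e)^{-n/p}\|\D v_\e\|_{\LL^p}$ can absorb the excess power; this only needs $q<(n+1)p/n$, which is always implied by the hypothesis.
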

\begin{proof}
As a consequence of \eqref{eq:diffF}, we have
\begin{align*}
\int_\Omega F(x,\D u_\e)\d x\leq \int_\Omega F(x,A_1)+\Lambda \lvert A_2\rvert(1+\lvert A_1\rvert+\lvert A_2\rvert)^{q-1}\d x.
\end{align*}
Due to Lemma \ref{lem:A1}, we only need to consider the second term. We first consider, using Lemma \ref{lem:mollification} and \eqref{def:WBcoverMultiplicity},
\begin{align*}
\int_\Omega \lvert A_2\rvert^q\d x \leq& \sum_i \int_{(1+\delta/2)B_i}\lvert( v_\e\star \rho_{\delta_i \e}-v_\e)\D\psi_i\rvert^q\d x\\
\lesssim& \sum_i r_i^{-q}(\e\delta_i)^{1-n\left(\frac 1 p-\frac 1 q\right)}\|v_\e\|_{\WW^{1,p}((1+\delta)B_i)}^p\\
\lesssim& \e^{q-nq\left(\frac 1 p-\frac 1 q\right)}\left(\|u\|_{\WW^{1,p}(\Omega)}+\|g\|_{\WW^{1,p}(\Omega)}\right)^p\xrightarrow{\e\to 0} 0,
\end{align*}
as long as we have
\begin{align*}
1-n\left(\frac 1 p-\frac 1 q\right)> 0 \quad\Leftrightarrow\quad q<\frac{np}{n-p}
\end{align*}
and ensure $N\left(1-n\left(\frac 1 p-\frac 1 q\right)\right)\geq 1$. Further, by H\"older's inequality, we deduce that also $\int_\Omega \lvert A_2\rvert\d x\to 0$. 
Thus, it remains to consider $\int_\Omega \lvert A_2\rvert \lvert A_1\rvert^{q-1}\d x$. Here we use Lemma \ref{lem:mollification}, \eqref{eq:overlappingCubes} and \eqref{def:WBcoverMultiplicity} to find
\begin{align*}
&\int_\Omega \lvert A_1\rvert \lvert A_2\rvert^{q-1}\d x\\
\lesssim& \sum_i \sum_{\{j\colon (1+\delta)B_i\cap (1+\delta)B_j\neq\emptyset\}}\int_{(1+\delta/2)B_i} \lvert (v_\e\star \rho_{\delta_i \e}-v_\e)\D\psi_i\rvert\lvert Dv_\e \star \rho_{\delta_j \e}\psi_j\rvert^{q-1}\d x\\
\lesssim& \sum_i \sum_{\{j\colon (1+\delta/2)B_i\cap (1+\delta/2)B_j\neq\emptyset\}}r_i^{-1}\|v_\e\star \rho_{\delta_i\e}-v_\e\|_{\LL^q((1+\delta/2)B_i)}\|\D v_\e\star \rho_{\delta_j\e}\|_{\LL^q((1+\delta/2)B_j)}^{q-1}\\
\lesssim& \sum_i \sum_{\{j\colon (1+\delta/2)B_i\cap (1+\delta/2)B_j\neq\emptyset\}}r_i^{-1} (\e \delta_i)^{1-n\left(\frac 1 p-\frac 1 q\right)}\|v_\e\|_{\WW^{1,p}((1+\delta)B_i)}^\frac p q\\
&\quad\times(\e\delta_i)^{-\frac{n(q-1)(q-p)} q}\|v_\e\|_{\WW^{1,p}((1+\delta)B_j)}^\frac{(q-1)p} q\\
\lesssim& \e^{n+1-\frac{nq} p}\left(\|u\|_{\WW^{1,p}(\Omega)}+\|g\|_{\WW^{1,p}(\Omega)}\right)^p\xrightarrow{\e\to 0} 0,
\end{align*}
as long as we have
\begin{align*}
n+1-\frac{nq} p>0\quad\Leftrightarrow\quad q<\frac{(n+1)p} n
\end{align*}
and ensure $N(n+1-\frac{nq} p)>1$.
\end{proof}

In the case of autonomous functionals, we can improve on the previous result, by choosing a partition of unity that is adapted to $u$. Let $\{B_i\}$ be a WB-covering of $\Omega$, where $B_i$ has radius $r_i$, $\{\phi_i\}$ a family of functions satisfying the assumptions of Theorem \ref{thm:partition} and $\{\psi_i\}$ the corresponding partition of unity given by Theorem \ref{thm:partition}. Note that in particular \eqref{eq:partitionOfUnityDeriv} is not necessarily satisfied.
 We denote by $\{\tilde u_\e\}$ and $\{\tilde v_\e\}$ the families of functions that are the outcome of the construction of Section \ref{sec:construction} with respect to this partition of unity. The analogous decomposition to \eqref{eq:derivative}, we denote $\D \tilde u_\e = \tilde A_1 + \tilde A_2$. 

\begin{lemma}\label{lem:autonomous}
Let $1< p\leq q< \min\left(p+1,\frac{np}{n-1}\right)$. Assume $F\equiv F(z)$ satisfies \eqref{def:ellipticity} and \eqref{def:naturalGrowth}. Then there is a family $\{\phi_i\}$ satisfying the assumptions of Theorem \ref{thm:partition}, such that
\begin{align}\label{eq:autonomousConvergence}
\int_\Omega F(\D \tilde u_\e)\to \int_\Omega F(\D u).
\end{align}
Moreover, $\tilde u_\e\in g+\WW^{1,q}_0(\Omega)$ and $\tilde u_\e\to u$ in $\WW^{1,p}(\Omega)$.
\end{lemma}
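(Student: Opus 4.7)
The plan is to choose the cutoffs $\{\phi_i\}$ in Theorem \ref{thm:partition} adaptively via Lemma \ref{lem:schaffner}, trading the pointwise bound $\lvert \D\psi_i\rvert\lesssim r_i^{-1}$ used in Lemma \ref{lem:xDependent} for an integral estimate that effectively saves one dimension by passing to spheres. This dimensional gain is exactly what upgrades the exponent restriction from $q<(n+1)p/n$ (as in Lemma \ref{lem:xDependent}) to $q<np/(n-1)$, while the separate constraint $q<p+1$ emerges from the $\LL^p$-to-$\LL^q$ loss in the mollification difference estimates of Lemma \ref{lem:mollification}.

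First I would invoke Lemma \ref{lem:A1} together with the remark following it, which removes the growth restriction in the autonomous case, to obtain $\int_\Omega F(\tilde A_1)\d x\to \int_\Omega F(\D u)\d x$. By the pointwise inequality \eqref{eq:diffF}, this reduces the problem to showing that the error
\begin{align*}
\int_\Omega \lvert \tilde A_2\rvert\bigl(1+\lvert \tilde A_1\rvert+\lvert \tilde A_2\rvert\bigr)^{q-1}\d x\xrightarrow{\e\to0} 0,
\end{align*}
which by Young's inequality decomposes into controlling $\int_\Omega \lvert \tilde A_2\rvert^q\d x$ and $\int_\Omega \lvert \tilde A_1\rvert^{q-1}\lvert \tilde A_2\rvert\d x$.

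For each ball $B_i$ of the WB-covering, I would then apply Lemma \ref{lem:schaffner} with weight $w=1+\lvert \D u\rvert^p$, exponents $t_1=q$ and $t_2=1$, and radii $\rho=(1+\delta/4)r_i$, $\sigma=(1+\delta/2)r_i$, to produce a cutoff $\phi_i$ satisfying \eqref{eq:phiAss} together with an integral bound on $\lvert \D\phi_i\rvert^q+\lvert \D\phi_i\rvert$ weighted by $1+\lvert \D u\rvert^p$. Combining the pointwise mollification bounds of Lemma \ref{lem:mollification} for $\tilde A_1$ on $(1+\delta/2)B_i$ with H\"older's inequality, the finite-overlap property \eqref{def:WBcoverMultiplicity}, and the crucial fact that the spherical integral on the right-hand side of Lemma \ref{lem:schaffner} is weaker than the corresponding volume integral by exactly one power of $r$, I would sum these local estimates and then let $\theta\in(0,1)$ approach $1$ to obtain vanishing of the error precisely in the range $q<\min(p+1,\, np/(n-1))$.

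Finally, the membership $\tilde u_\e\in g+\WW^{1,q}_0(\Omega)$ would follow by repeating the approximation argument of Lemma \ref{lem:ueRegularity} with these adapted cutoffs and weight $w\equiv 1$ in Lemma \ref{lem:schaffner}, producing a $C_c^\infty$-sequence converging to $\tilde u_\e-g$ in $\WW^{1,q}$; the convergence $\tilde u_\e\to u$ in $\WW^{1,p}$ is an immediate consequence of Lemma \ref{lem:ueConvergence}. The main technical difficulty I anticipate lies in the simultaneous calibration of the parameters $\theta$ and $N$ (in $\delta_i=C_0 r_i^N$): the former controls the dimensional gain from Lemma \ref{lem:schaffner} and must be taken close to $1$ to fully exploit the exponent $np/(n-1)$, while the latter governs the trade-off between the mollification scale $\delta_i\e$ and the negative powers of $r_i$ generated by the derivative of $\phi_i$ when summing over the covering.
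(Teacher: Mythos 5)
Your high-level plan — reduce to the $\tilde A_2$ error term via Lemma \ref{lem:A1} and \eqref{eq:diffF}, then use Lemma \ref{lem:schaffner} to choose the cutoffs $\phi_i$ so that a spherical interpolation estimate replaces the blunt pointwise bound $|\D\psi_i|\lesssim r_i^{-1}$ — is exactly the route the paper takes. But the concrete application of Lemma \ref{lem:schaffner} you describe is miscalibrated, and as stated the argument does not close.

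The issue is the choice of weight. You propose applying Lemma \ref{lem:schaffner} with $w = 1+|\D u|^p$ and exponents $t_1 = q$, $t_2 = 1$. But the term you actually need to control is
\[
\sum_i \int_{(1+\delta/2)B_i} \lvert \tilde v_\e\star\rho_{\delta_i\e}-\tilde v_\e\rvert^s\,\lvert \D\psi_i\rvert^s\,\d x,
\qquad s=\tfrac{p}{p+1-q},
\]
which arises after the H\"older step $\int |\tilde A_2|(1+|\tilde A_1|+|\tilde A_2|)^{q-1} \le (1+\|\tilde A_1\|_{L^p}^{q-1}+\|\tilde A_2\|_{L^p}^{q-1})\,\|\tilde A_2\|_{L^s}$. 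There is no pointwise bound $|\tilde v_\e\star\rho_{\delta_i\e}-\tilde v_\e|^s \lesssim C(\e,r_i)(1+|\D u|^p)$, so a cutoff optimized for the weight $1+|\D u|^p$ gives you no control over this quantity. The weight must be the mollification error itself: the paper takes $w=\sum_i|\tilde v_\e\star\rho_{\delta_i\e}-\tilde v_\e|^s$ and $t_1=s$. The right-hand side of Lemma \ref{lem:schaffner} is then a spherical integral of the mollification error, and the dimensional gain is realized by interpolating $\|\tilde v_\e\star\rho_{\delta_i\e}-\tilde v_\e\|_{L^s(\p B_{j,r})}$ between $L^p(\p B_{j,r})$ (which gains a factor $\delta_i\e$) and $L^{(n-1)p/(n-1-p)}(\p B_{j,r})$ (via trace/Sobolev on the sphere). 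This produces a fixed interpolation exponent $\theta = 1+(n-1)(p-q)/p$, and the condition $\theta>0$ is equivalent to $q<np/(n-1)$. This $\theta$ is not a free parameter to send to $1$; the free parameter in Lemma \ref{lem:schaffner} (the $\delta$ there, called $t$ in the paper) is set to $t=p+1-q$ so that $ts=p$. Similarly, the constraint $q<p+1$ does not come from mollification loss but from requiring the H\"older conjugate exponent $s=p/(p+1-q)$ to be finite. Correcting the weight, the exponent $t_1$, and the role of the two parameters gives the paper's proof; as written your application of Lemma \ref{lem:schaffner} would not bound the quantity that actually needs bounding.
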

\begin{proof}
Note that the proof of Lemma \ref{lem:A1} did not use \eqref{eq:partitionOfUnityDeriv}, so that Lemma \ref{lem:A1} applies to $\tilde u_\e$. Thus as in Lemma \ref{lem:xDependent}, to see \eqref{eq:autonomousConvergence},  it suffices to show 
$$
\int_\Omega \lvert \tilde A_2\rvert\left(1+\lvert \tilde A_2\rvert^{q-1}+\lvert \tilde A_2\rvert^{q-1}\right)\d x\xrightarrow{\e\to 0} 0.
$$
Let $t\in(0,1)$ to be fixed at a later stage and write $s=\frac p {p+1-q}\in(1,\infty)$, since $p\leq q<p+1$.

 Then we estimate, using H\"older's inequality,
 \begin{align*}
& \int_\Omega \lvert \tilde A_2\rvert \left(1+\lvert \tilde A_1\rvert^{q-1}+\lvert \tilde A_2\rvert^{q-1}\right)\\
\leq& \left(1+\|\tilde A_1\|_{\LL^p(\Omega)}^{q-1}+\|\tilde A_2\|_{\LL^p(\Omega)}^{q-1}\right)\left(\int_\Omega \sum_i \lvert \tilde v_\e\star \rho_{\delta_i \e}-\tilde v_\e\rvert^s\lvert \D\psi_i\rvert^s\d x\right)^\frac 1  s = I \times II.
\end{align*}
Using Lemma \ref{lem:schaffner}, \eqref{def:WBcoverMultiplicity} and \eqref{eq:overlappingCubes}, we may choose $\{\phi_j\}$ so that
\begin{align*}
II\lesssim& \sum_j \left(\int_{(1+\delta/2)B_j}\sum_{i\colon (1+\delta/2)B_i\cap (1+\delta/2)B_j\neq \emptyset}\lvert \tilde v_\e\star \rho_{\delta_i \e}-\tilde v_\e\rvert^s\lvert D\phi_j\rvert^s\d x\right)^\frac 1 s \\
 \lesssim& \sum_j r_j^{-(s+1/t)} \left(\int_{r_j}^{(1+\delta/2)r_j}\sum_{\{i\colon (1+\delta/2)B_i\cap (1+\delta/2)B_j\neq\emptyset\}} \|\tilde  v_\e\star \rho_{\delta_i\e}-\tilde v_\e\|_{\LL^s(\p B_{j,r})}^{t s}\d r\right)^\frac 1 {t s}\\
 =& II_1.
\end{align*}
Here $B_{i,r}$ denotes the ball of radius $r$ with the same center as $B_i$.

Due to \eqref{eq:overlappingCubes}, we can ensure that whenever $(1+\delta/2)B_i\cap (1+\delta_2)B_j\neq\emptyset$, then ${r_j-\delta_i>r_j/2}$. Note that for $r\in (r_j,(1+\delta/2)r_j)$, (if $p\geq n-1$, $\frac{n-1}{n-1-p}$ needs to be replaced by $\tau$ for a sufficiently large choice of $\tau$)
\begin{align*}
\|\tilde u_\e\star\rho_{\delta_i\e}-\tilde u_\e\|_{\LL^{\frac{(n-1)p}{n-1-p}}(\p B_{j,r})}\lesssim \|\tilde u_\e\star\rho_{\delta_i,\e}- \tilde u_\e\|_{\WW^{1,p}(\p B_{j,r})}
\end{align*}
and
\begin{align*}
\|\tilde u_\e\star\rho_{\delta_i\e}-\tilde u_\e\|_{\LL^p(\p B_{j,r})}\lesssim \delta_i \e \|\tilde u_\e\|_{\WW^{1,p}(A_{j,\delta\e_i})},
\end{align*}
where $A_{j,\delta_i\e}$ denotes the annulus of width $2\delta_i\e$ centered around the sphere $\p B_{j,r}$.
Thus, by interpolation
\begin{align*}
\|\tilde u_\e\star\rho_{\delta_i,\e}-\tilde u_\e\|_{\LL^\frac p {p+1-q}}\lesssim (\delta_i\e)^\theta  
\left(\|\tilde u_\e\|_{\WW^{1,p}(A_{j,\delta\e_i})}+ \|\tilde u_\e\star\rho_{\delta_i\e}-\tilde u_\e\|_{\WW^{1,p}(\p B_{j,r})}\right)
\end{align*}
where 
$$
\frac{p+1-q} p = \frac \theta p+\frac{(1-\theta)(n-1-p)}{(n-1)p} \quad\Leftrightarrow\quad \theta = 1+(n-1)\frac{p-q} p.
$$
This is valid since
$$
\theta>0 \quad\Leftrightarrow \quad q<\frac{np}{n-1}.
$$
In particular, we deduce using \eqref{eq:overlappingCubes} and with the choice $t = p+1-q\in(0,1)$ (so that $ts = p$),
\begin{align}\label{eq:II}
II_1\leq& \sum_{j}\sum_{\{i\colon (1+\delta/2)B_i\cap (1+\delta/2)B_j\neq\emptyset\}} r_j^{-\frac{p+1}{p+1-q}}(\delta_i\e)^\theta \nonumber\\
&\quad\times\int_{r_j}^{(1+\delta/2)r_j}\|\tilde v_\e\|_{\WW^{1,p}(A_{j,\delta_i\e})}^p+\|\tilde v_\e\star \rho_{\delta_i,\e}-v_\e\|_{\WW^{1,p}(\p B_r)}^p\d r\nonumber\\
\lesssim& \e^\theta\sum_j r_j^{-\frac{p+1}{p+1-q}+M\theta}\lvert B_j\rvert^{-\frac{p+1}{p+1-q}+M\theta}\|\tilde v_\e\|_{\WW^{1,p}((1+\delta)B_j)}^p\nonumber\\
\lesssim& \e^\theta\|\tilde v_\e\|_{\WW^{1,p}(\Omega)}^p,
\end{align}
if we choose $N$ large enough that $-\frac{p+1}{p+1-q}+N\theta>0$.

We now turn to estimating $I$ and note as a consequence of our argument above, since $\frac p {p+1-q}\geq p$, 
$$
\|\tilde A_2\|_{\LL^p(\Omega)}\lesssim \|\tilde A_2\|_{\LL^s(\Omega)}^\frac p s\xrightarrow{\e\to 0}0.
$$
Further, noting that our estimate on $A_1$ in the proof of Lemma \ref{lem:ueConvergence} does not use \eqref{eq:partitionOfUnityDeriv}, we obtain that 
$$
\|\tilde A_1\|_{\LL^p(\Omega)}\lesssim \|\D u\|_{\LL^{1,p}(\Omega)}+\|\D g\|_{\LL^p(\Omega)}.
$$
Thus, combining \eqref{eq:II} with these observations, \eqref{eq:autonomousConvergence} follows. 

Further, using the argument of Lemma \ref{lem:ueConvergence}, we deduce that $\tilde u_\e\to u$ in $\WW^{1,p}(\Omega)$. Thus, it remains to show that $\tilde u_\e \in g+\WW^{1,q}_0(\Omega)$. Arguing as in Lemma \ref{lem:ueRegularity}, it suffices to estimate
\begin{align*}
\sum_i \int_{(1+\delta/2)B_i\cap \Omega_s} \lvert \tilde v_\e\star \rho_{\delta_i \e}-\tilde v_\e\rvert^q \lvert \D\psi_i\rvert^q\d x\xrightarrow{s\to 0} 0.
\end{align*}
However, noting that $q\leq\frac p {p+1-q}$, this again follows from our estimate on $II$ using H\"older's inequality.
\end{proof}

We next consider controlled growth-conditions where we need to restrict to $2\leq p$. 
We prove the following:
\begin{lemma}\label{lem:controlled}
Let $2\leq p\leq q< \min\left(p+2,p\left(1+\frac 2 {n-1}\right)\right)$. Assume $F\equiv F(z)$ is satisfies \eqref{def:ellipticity} and \eqref{def:controlledGrowth}. Suppose $u\in g+\WW^{1,p}_0(\Omega)$ solves \eqref{eq:min}. Then there is a family $\{\phi_i\}$ satisfying the assumptions of Theorem \ref{thm:partition}, such that
\begin{align*}
\int_\Omega F(\D \tilde u_\e)\to \int_\Omega F(\D u).
\end{align*}
Moreover, $\tilde u_\e\in g+\WW^{1,q}_0(\Omega)$ and $\tilde u_\e \to u$ in $\WW^{1,p}(\Omega)$.
\end{lemma}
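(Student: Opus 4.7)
The plan is to use the controlled growth condition \eqref{def:controlledGrowth} with $w=\tilde A_1$ and $z=\tilde A_1+\tilde A_2=\D\tilde u_\e$ to obtain the pointwise inequality
$$F(\D\tilde u_\e)\leq F(\tilde A_1)+\langle\partial_z F(\tilde A_1),\tilde A_2\rangle+\Lambda\bigl(1+\lvert\tilde A_1\rvert^2+\lvert\D\tilde u_\e\rvert^2\bigr)^{\frac{q-2}{2}}\lvert\tilde A_2\rvert^2.$$
Since $\tilde u_\e\in g+\WW^{1,q}_0(\Omega)$ is an admissible competitor, minimality of $u$ will give the matching lower bound $\int F(\D u)\d x\leq \int F(\D\tilde u_\e)\d x$, so it suffices to show the three integrated right-hand terms converge to $\int F(\D u)\d x$, $0$ and $0$ respectively. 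The first is immediate from (the autonomous case of) Lemma \ref{lem:A1}, and the auxiliary convergence $\tilde u_\e\to u$ in $\WW^{1,p}(\Omega)$ is exactly Lemma \ref{lem:ueConvergence}.

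For the quadratic error term I apply H\"older's inequality with conjugate exponents $p/(q-2)$ and $p/(p+2-q)$ (permissible as $2\leq q<p+2$), reducing matters to showing $\|\tilde A_2\|_{\LL^{2p/(p+2-q)}(\Omega)}\to 0$. Redoing the spherical argument in the proof of Lemma \ref{lem:autonomous}---choosing $\{\phi_i\}$ via Lemma \ref{lem:schaffner} and interpolating between $\LL^p$ and $\LL^{(n-1)p/(n-1-p)}$ on the spheres $\p B_{j,r}$---yields decay of order $\e^\theta$ with $\theta=1+(n-1)(p-q)/(2p)$. This interpolation exponent is positive precisely when $q<p(1+2/(n-1))$, which is exactly our assumption.

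The main obstacle is the linear cross-term $\int\langle\partial_z F(\tilde A_1),\tilde A_2\rangle\d x$. The naive bound $\lvert\partial_z F(\tilde A_1)\rvert\lesssim 1+\lvert\tilde A_1\rvert^{q-1}$ combined with H\"older and the Schäffner estimate only covers $q<np/(n-1)$, which is strictly smaller than our range. To handle the remaining regime I exploit minimality: since $p\geq 2$ forces $q<p(1+2/(n-1))\leq np/(n-p)$, Lemma \ref{lem:EulerLagrange} gives $\partial_z F(\D u)\in\LL^{q'}(\Omega)$ together with the Euler--Lagrange identity for test functions in $\WW^{1,q}_0(\Omega)$. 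Once $\tilde u_\e-g\in\WW^{1,q}_0(\Omega)$ is verified as in Lemma \ref{lem:autonomous} (with \eqref{def:doubling} controlling $\LL^q$-integrability near $\p\Omega$), I split
$$\int\langle\partial_z F(\tilde A_1),\tilde A_2\rangle\d x=\int\langle\partial_z F(\tilde A_1)-\partial_z F(\D u),\tilde A_2\rangle\d x+\int\langle\partial_z F(\D u),\tilde A_2\rangle\d x.$$
Using $\tilde A_2=\D\tilde u_\e-\tilde A_1$ and testing Euler--Lagrange with $\tilde u_\e-g$, the second integral reduces to $\int\partial_z F(\D u)\cdot(\D g-\tilde A_1)\d x$; transferring the variable mollification from $\tilde A_1$ onto $\partial_z F(\D u)$ and using strong convergence in $\LL^{q'}(\Omega)$ of the resulting mollified field yields zero in the limit. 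The first integral is handled by the $p\geq 2$ estimate $\lvert\partial_z F(z)-\partial_z F(w)\rvert\lesssim(1+\lvert z\rvert+\lvert w\rvert)^{q-2}\lvert z-w\rvert$ and H\"older, combining $\tilde A_1\to\D u$ in $\LL^p(\Omega)$ with the decay of $\tilde A_2$ already established in the quadratic analysis.

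The main difficulty throughout is the linear cross-term: the restriction $p\geq 2$ enters to provide the Lipschitz-type difference estimate for $\partial_z F$, and the controlled growth structure combined with minimality of $u$ is essential to close the argument in the full range $q<p(1+2/(n-1))$.
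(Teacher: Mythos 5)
Your overall architecture matches the paper's through the quadratic error term (same controlled-growth Taylor expansion, same H\"older split, same Sch\"affner-type spherical interpolation, same $\theta$, and using minimality for the lower bound is fine although convexity already supplies it). The treatment of the linear cross-term, however, has a genuine gap.

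You split $\int_\Omega \partial_z F(\tilde A_1)\cdot \tilde A_2\,\d x$ into $I=\int(\partial_z F(\tilde A_1)-\partial_z F(\D u))\cdot\tilde A_2$ and $II=\int\partial_z F(\D u)\cdot\tilde A_2$. The term $II$ is unproblematic: since $\partial_z F(\D u)\in\LL^{q'}(\Omega)$ by Lemma~\ref{lem:EulerLagrange} and $\tilde A_2\to 0$ in $\LL^q(\Omega)$ (which you get from the $\LL^{2p/(p+2-q)}$-decay and boundedness of $\Omega$), H\"older already kills $II$ without ever invoking the Euler--Lagrange system. But the argument you give for $I$ does not close. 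From the Lipschitz-type estimate $\lvert\partial_z F(z)-\partial_z F(w)\rvert\lesssim(1+\lvert z\rvert+\lvert w\rvert)^{q-2}\lvert z-w\rvert$ you are led to bound $\int(1+\lvert\tilde A_1\rvert+\lvert\D u\rvert)^{q-2}\lvert\tilde A_1-\D u\rvert\,\lvert\tilde A_2\rvert\,\d x$ using only $\tilde A_1-\D u\to 0$ in $\LL^p(\Omega)$ and $\tilde A_2\to 0$ in $\LL^{2p/(p+2-q)}(\Omega)$. Whatever H\"older exponents you choose, for $q>p$ the indices do not balance: you would need $\tilde A_1-\D u$ to be small (or even merely bounded) in some $\LL^r$ with $r>p$, which is not available. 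Equivalently, the Lipschitz estimate gives $\lVert\partial_z F(\tilde A_1)-\partial_z F(\D u)\rVert_{\LL^{q'}}$ controlled by $\lVert\tilde A_1\rVert_{\LL^q}$, and $\tilde A_1$ is not uniformly bounded in $\LL^q(\Omega)$, only in $\LL^p(\Omega)$.

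This is exactly the obstruction the paper circumvents with the Fenchel-duality argument: one shows directly that $\int_\Omega F^*(\partial_z F(\tilde A_1))\,\d x$ is bounded uniformly in $\e$, which is where the doubling hypothesis \eqref{def:doubling} is indispensable (to control $F(s\tilde A_1)$ by $1+F(\tilde A_1)$ after Jensen). Combined with a.e.\ convergence of $\tilde A_1$, this yields $\partial_z F(\tilde A_1)\to\partial_z F(\D u)$ in $\LL^{q'}(\Omega)$, after which H\"older against $\tilde A_2\to 0$ in $\LL^q$ finishes the cross-term. Relatedly, your parenthetical that \eqref{def:doubling} is used "controlling $\LL^q$-integrability near $\partial\Omega$" for $\tilde u_\e-g\in\WW^{1,q}_0(\Omega)$ is a misattribution: the $\WW^{1,q}_0$-membership follows from the $II$-estimate as in Lemma~\ref{lem:autonomous} with no doubling needed, whereas the actual role of doubling is precisely the uniform $F^*$-bound described above. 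As written, your proof of the cross-term does not reach the full range $q<p\left(1+\frac{2}{n-1}\right)$; it needs the duality/doubling step to be restored.
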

\begin{proof}
Using the same notation as in Lemma \ref{lem:autonomous}, we find
\begin{align*}
&\int_\Omega F(\D \tilde u_\e)-\int_\Omega F(\tilde A_1)-\int_\Omega \p_z F(\tilde A_1)\cdot (\D \tilde u_\e-\tilde A_1)\\
\lesssim& \int_\Omega \lvert \tilde A_2\rvert^2(1+\lvert \tilde A_1\rvert+\lvert \tilde A_2\rvert)^{q-2}.
\end{align*}

We first deal with the term on the right-hand side, which we'll prove converges to $0$ as $\e\to 0$. Using H\"older, we find
\begin{align*}
&\int_\Omega \lvert \tilde A_1\rvert^2\left(1+\rvert \tilde A_1\rvert^{q-2}+\lvert \tilde A_2\rvert^{q-2}\right)\d x\\
\leq& \left(1+\|\tilde A_1\|_{\LL^p(\Omega)}+\| \tilde A_2\|_{\LL^p(\Omega)}\right)^{q-2}\left(\int_\Omega \left(\sum_i \lvert \tilde v_\e\star \rho_{\delta_i \e}-\tilde v_\e\lvert\right)^\frac{2p} {p+2-q}\right)^\frac{p+2-q} p.
\end{align*}
We can now proceed exactly as in Lemma \ref{lem:autonomous} as long as $N$ is sufficiently large and $\theta>0$, where
$$
\frac{p+2-q} {2p}=\frac \theta p+\frac{(1-\theta)(n-1-p)}{(n-1)p} \quad\Leftrightarrow \quad\theta = \frac{n+1} 2-\frac{(n-1)q}{2p},
$$
noting that
\begin{align*}
\theta>0 \quad\Leftrightarrow\quad q<p\left(1+\frac{2}{n-1}\right).
\end{align*}

Moreover, arguing as in Lemma \ref{lem:autonomous} it also follows that $\tilde u_\e\in g+\WW^{1,q}_0(\Omega)$ and $\tilde u_\e\to u$ in $\WW^{1,p}(\Omega)$.

Thus, it remains to establish that
\begin{align*}
\int_\Omega \p_z F(\tilde A_1)\cdot(\D \tilde u_\e-\tilde A_1)\to 0.
\end{align*}
Note that since $q\leq \frac{p}{p+2-q}$, our argument above showed that $\D \tilde u_\e-\tilde A_1 = \tilde A_2\to 0$ in $\LL^q(\Omega)$. In particular, using Lemma \ref{lem:EulerLagrange}, it suffices to show $\p_z F(\tilde A_1)\to \p_z F(\D u)$ in $\LL^{q^\prime}(\Omega)$, since then by H\"older's inequality
\begin{align*}
\int_\Omega \p_z F(\tilde A_1)\cdot(\D \tilde u_\e-\tilde A_1)\leq& \|\p_z F(\tilde A_1)\|_{\LL^{q^\prime}(\Omega)}\|\tilde A_2\|_{\LL^q(\Omega)}\\
\leq&\left(1+\|\p_z F(\D u)\|_{\LL^{q^\prime}(\Omega)}\right) \|\tilde A_2\|_{\LL^q(\Omega)}\to 0.
\end{align*}

Thus, we turn to proving $\p_z F(\tilde A_1)\to \p_z F(\D u)\in \LL^{q^\prime}(\Omega)$. Using Fenchel's inequality, we find for any $s>1$,
\begin{align*}
F^\ast(\p_z F(\tilde A_1)) = \langle \p_z F(\tilde A_1),\tilde A_1\rangle - F(\tilde A_1)\leq \frac 1 s F^\ast(\p_z F(\tilde A_1)+\frac 1 s F(s \tilde A_1)-F(\tilde A_1).
\end{align*}
Re-arranging, we find, using also that as a consequence of \eqref{def:ellipticity}, $F(z)\geq -c$ for some $c>0$,
\begin{align*}
F^\ast(\p_z F(\tilde A_1)) \leq c+\frac 1 {s-1} F(s \tilde A_1). 
\end{align*}

Re-calling the definition of $\tilde A_1$, using Jensen's inequality, \eqref{def:WBcoverMultiplicity} and employing Lemma \ref{lem:mollification}, we obtain
\begin{align*}
\int_\Omega F(s \tilde A_1)\d x \leq& \sum_i\int_{(1+\delta/2)B_i} F(s \D \tilde v_\e)\star \rho_{\e\delta_i}\psi_i\d x\\
\lesssim& 1+ \int_{\Omega} F(s\D u)\d x+\int_\Omega F(s\D g)\d x\\
\lesssim& 1+\int_\Omega F(\D u)+F(\D g)\d x<\infty.
\end{align*}
To obtain the last line, we used the doubling property of $F$. Since $\tilde A_1\to \D u$ almost everywhere by Lemma \ref{lem:ueConvergence}, this concludes the proof, by an application of a version of dominated convergence.
\end{proof}

We next turn to controlled-duality growth.
\begin{lemma}\label{lem:controlledDuality}
Let $2\leq p\leq q< \frac{np}{n-p}$. Assume $F\equiv F(z)$ satisfies \eqref{def:ellipticity} and \eqref{def:controlledDualityGrowth}. Suppose $u\in g+\WW^{1,p}_0(\Omega)$ solves \eqref{eq:min}. Then
\begin{align*}
\int_\Omega F(\D u_\e)\to \int_\Omega F(\D u).
\end{align*}
\end{lemma}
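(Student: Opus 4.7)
The approach is to mirror Lemma \ref{lem:controlled}, using \eqref{def:controlledDualityGrowth} in place of \eqref{def:controlledGrowth} and circumventing the missing doubling assumption \eqref{def:doubling} via the $\LL^{q'}$-integrability of $\p_z F(\D u)$ furnished by Lemma \ref{lem:EulerLagrange}. Starting from the pointwise bound obtained by applying \eqref{def:controlledDualityGrowth} with $z=\D u_\e$ and $w=A_1$,
\begin{align*}
F(\D u_\e)-F(A_1)-\p_z F(A_1)\cdot A_2\leq\Lambda\lvert A_2\rvert^2\bigl(1+\lvert\p_z F(A_1)\rvert+\lvert\p_z F(\D u_\e)\rvert\bigr)^{\frac{q-2}{q-1}},
\end{align*}
and integrating, H\"older's inequality with the conjugate exponents $q/2$ and $q/(q-2)$ (valid since $\tfrac{q-2}{q-1}\cdot\tfrac{q}{q-2}=q'$) bounds the right-hand side by
\begin{align*}
C\|A_2\|_{\LL^q(\Omega)}^{2}\bigl(1+\|\p_z F(A_1)\|_{\LL^{q'}(\Omega)}+\|\p_z F(\D u_\e)\|_{\LL^{q'}(\Omega)}\bigr)^{\frac{q-2}{q-1}}.
\end{align*}
The estimate $\|A_2\|_{\LL^q(\Omega)}^q\lesssim \e^{n+q(1-n/p)}$ from the proof of Lemma \ref{lem:xDependent} drives the first factor to zero, and the hypothesis $q<\frac{np}{n-p}$ is precisely what is required here.

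Applying Lemma \ref{lem:A1} (valid without restriction for autonomous $F$) yields $\int F(A_1)\to\int F(\D u)$. Combining this with the H\"older bound $\int\p_z F(A_1)\cdot A_2\leq\|\p_z F(A_1)\|_{\LL^{q'}(\Omega)}\|A_2\|_{\LL^q(\Omega)}$, everything reduces to showing uniform $\LL^{q'}$-bounds on $\p_z F(A_1)$ and $\p_z F(\D u_\e)$: then $\limsup_\e \int F(\D u_\e)\leq\int F(\D u)$ follows, while the reverse inequality $\int F(\D u)\leq\int F(\D u_\e)$ is immediate from the minimality of $u$ in $g+\WW^{1,p}_0(\Omega)$, applicable because $u_\e\in g+\WW^{1,p}_0(\Omega)$ by Lemma \ref{lem:ueRegularity}.

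The principal obstacle is the uniform $\LL^{q'}$-bound. The plan is to exploit that \eqref{def:controlledDualityGrowth} together with \eqref{def:ellipticity} forces the $q$-growth upper bound $F(z)\lesssim 1+\lvert z\rvert^q$ (derived via a self-referential Young absorption using the coupling between $F$ and $\p_z F$ appearing in \eqref{def:controlledDualityGrowth}), equivalently the $q'$-coercivity $F^*(w)\geq c\lvert w\rvert^{q'}-C$ by Legendre duality. The Fenchel identity $F^*(\p_z F(A_1))=A_1\cdot\p_z F(A_1)-F(A_1)$, combined with Young's inequality $A_1\cdot\p_z F(A_1)\leq\delta\lvert\p_z F(A_1)\rvert^{q'}+C_\delta\lvert A_1\rvert^q$ and absorption, reduces the bound to controlling $\int\lvert A_1\rvert^q$; this in turn is obtained from a Jensen-type estimate on the convex combination structure $A_1=\sum_i\psi_i(\D v_\e\star\rho_{\delta_i\e})$, propagating the $q$-growth bound on $F$ through mollification of $\D v_\e$, and invoking the improved integrability of $\D u$ provided by the Euler--Lagrange equation together with $\p_z F(\D u)\in\LL^{q'}(\Omega)$. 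The same scheme, using in addition the $\LL^q$-decay of $A_2$, controls $\p_z F(\D u_\e)=\p_z F(A_1+A_2)$; extracting the required higher integrability of $\D u$ from minimality in the regime $p>2$ will be the most delicate technical point.
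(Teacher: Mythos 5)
The high-level scaffolding is the same as the paper's: you start from the pointwise consequence of \eqref{def:controlledDualityGrowth} with $z=\D u_\e$, $w=A_1$, separate the quadratic error term from the cross term $\int\p_z F(A_1)\cdot A_2$, use $\|A_2\|_{\LL^q}\to 0$ and Lemma~\ref{lem:A1} for the convergence of $\int F(A_1)$, and correctly identify the uniform $\LL^{q'}$-bound on $\p_z F(A_1)$ (and on $\p_z F(\D u_\e)$) as the real crux, since \eqref{def:doubling} is not assumed. You also correctly observe that the reverse inequality $\int F(\D u)\le\int F(\D u_\e)$ is free by minimality (the paper instead has two-sided convergence via weak lower semicontinuity, but that is a cosmetic difference).

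However, your plan for the $\LL^{q'}$-bound does not close, and this is a genuine gap, not a technicality. After the Fenchel identity and the Young inequality $A_1\cdot\p_z F(A_1)\le\delta\lvert\p_z F(A_1)\rvert^{q'}+C_\delta\lvert A_1\rvert^q$ with the $q'$-coercivity of $F^*$, you absorb and are left needing $\int_\Omega\lvert A_1\rvert^q\,\d x$ bounded uniformly in $\e$. That quantity is not controllable here: Jensen/convexity together with Lemma~\ref{lem:A1} only gives $\int F(A_1)\lesssim 1+\int F(\D u)$, and since $F$ may behave like $\lvert z\rvert^p$ (which satisfies both \eqref{def:ellipticity} and \eqref{def:controlledDualityGrowth}), finiteness of $\int F(A_1)$ does not imply $\int\lvert A_1\rvert^q<\infty$. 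The fallback you propose---``improved integrability of $\D u$ from the Euler--Lagrange equation together with $\p_z F(\D u)\in\LL^{q'}$''---does not deliver $\D u\in\LL^q$ either: the lower bound from \eqref{def:ellipticity} yields only $\lvert\p_z F(z)\rvert\gtrsim\lvert z\rvert^{p-1}$, so $\p_z F(\D u)\in\LL^{q'}$ implies at best $\D u\in\LL^{(p-1)q'}$, and $(p-1)q'<q$ whenever $p<q$. (Indeed if one could show $\D u\in\LL^q$ directly there would be nothing left to prove.) The same obstruction reappears when you try to bound $\|\p_z F(\D u_\e)\|_{\LL^{q'}}=\|\p_z F(A_1+A_2)\|_{\LL^{q'}}$, since the $A_1$ contribution again requires $\int\lvert A_1\rvert^q$.

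The paper's resolution exploits a structural consequence of \eqref{def:controlledDualityGrowth} that your proposal does not use: by Legendre duality, \eqref{def:controlledDualityGrowth} implies \emph{strong} (degenerate) convexity of $F^*$,
\begin{align*}
F^*(\tau\xi_1+(1-\tau)\xi_2)\le\tau F^*(\xi_1)+(1-\tau)F^*(\xi_2)-c\,\tau(1-\tau)\,(1+\lvert\xi_1\rvert+\lvert\xi_2\rvert)^{q'-2}\lvert\xi_1-\xi_2\rvert^2.
\end{align*}
One then writes $F^*(\p_z F(A_1))=A_1\cdot\p_z F(A_1)-F(A_1)\le 2F^*\bigl(\tfrac12\p_z F(A_1)\bigr)+F(A_1)$ by Young, and applies strong convexity at the midpoint $\tfrac12\p_z F(A_1)=\tfrac12\p_z F(A_1)+\tfrac12\cdot 0$; the $F^*(\p_z F(A_1))$ terms cancel identically and one is left with the pointwise bound
\begin{align*}
c\,(1+\lvert\p_z F(A_1)\rvert)^{q'-2}\lvert\p_z F(A_1)\rvert^2\le F(A_1)+F^*(0),
\end{align*}
whose integral is controlled by Lemma~\ref{lem:A1} alone, with no appeal to $\int\lvert A_1\rvert^q$, to doubling, or to higher integrability of $\D u$. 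This is exactly the step that distinguishes the controlled-duality case from the controlled case (where the paper does invoke \eqref{def:doubling}), and it is the missing idea in your proposal.
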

\begin{proof}
Arguing as in Lemma \ref{lem:controlledDuality}, it suffices to estimate
\begin{align}\label{eq:controlled1}
\int_\Omega \lvert A_2\rvert^2 (1+\lvert\p_z F(\D u)\rvert+\lvert\p_z F(A_2)\rvert)^\frac{q-2}{q-1},
\end{align}
and 
\begin{align}\label{eq:controlled2}
\int_\Omega \p_z F(A_1)\cdot A_2\d x.
\end{align}
On the one hand,
\begin{align*}
\int_\Omega \lvert A_2\rvert^2 (1+\lvert\p_z F(A_2)\rvert)^\frac{q-2}{q-1}\lesssim \int_\Omega \lvert A_2\rvert^2(1+\lvert A_2\rvert)^{q-2}, 
\end{align*}
 since a consequence of the convexity of $F$ and \eqref{def:ellipticity} is that $\lvert \p_z F(z)\rvert\lesssim 1+\lvert z\rvert^{q-1}$. However, as in Lemma \ref{lem:xDependent}, we note that $\int_\Omega \lvert A_2\rvert^2(1+\lvert A_2\rvert)^{q-2}\to 0$ as $\e\to 0$ since $2\leq q<\frac{np}{n-p}$.
On the other hand, due to Lemma \ref{lem:EulerLagrange}, we may use H\"older's inequality to estimate
\begin{align*}
\int_\Omega \lvert A_2\rvert^2 \lvert\p_z F(\D u)\rvert^\frac{q-2}{q-1}\leq \|A_2\|_{\LL^q(\Omega)}^2 \|\p_z F(\D u)\|_{\LL^{q^\prime}(\Omega)}^{q-2}.
\end{align*}
Noting that due to Lemma \ref{lem:EulerLagrange}, $\p_z F(\D u)\in \LL^{q^\prime}(\Omega)$, using again that $\|A_2\|_{\LL^q(\Omega)}\to 0$, this concludes the proof of \eqref{eq:controlled1}. 

On the other hand, \eqref{def:controlledDualityGrowth} implies that there exists $c>0$ such that for any $x,y\in\Omega$ and $\tau\in[0,1]$,
\begin{align*}
F^\ast(\tau x+(1-\tau y)\leq \tau F^\ast(x)+(1-\tau) F^\ast(y)-\tau(1-\tau)c (1+\lvert x\rvert+\lvert y\rvert)^{q^\prime-2}\lvert x-y\rvert^2
\end{align*}
For a proof of this fact, see e.g. \cite{DeFilippis2020} or \cite{Hiriart-Urruty1993a}. Now, using Fenchel's inequality and the fact that $\lvert z\rvert^{q^\prime}-1\lesssim F^\ast(z)\lesssim 1+\lvert z\rvert^{p^\prime}$,
\begin{align*}
F^\ast(\p_z F(A_1)) \leq& 2 F^\ast\left(\frac 1 2\p_z F(A_1)\right) + 2 F(A_1)\\
\lesssim& 1+F^\ast(\p_z F(A_1))- \frac c 4 (1+\lvert \p_z F(A_1)\rvert)^{q^\prime-2}\lvert \p_z F(A_1)\rvert^2 + 2 F(A_1).
\end{align*}
Re-arranging and integrating, we deduce
\begin{align*}
\int_\Omega (1+\lvert \p_z F(A_1)\rvert)^{q^\prime-2}\lvert \p_z F(A_1)\rvert^2\lesssim 1+\int_\Omega F(A_1)\d x\lesssim 1+\int_\Omega F(\D u)\d x.
\end{align*}
To obtain the last inequality, we used Lemma \ref{lem:A1}. We now estimate for any $\tau>0$,
\begin{align*}
&\int_\Omega \p_z F(A_1)\cdot A_2\d x\\
\leq& \tau\int_\Omega (1+\lvert \p_z F(A_1)\rvert)^{q^\prime-2}\lvert \p_z F(A_1)\rvert^2\d x+C(\tau)\int_\Omega (1+\lvert A_2\rvert)^{q-2}\lvert A_2\rvert^2\d x\\
\lesssim& \tau\left(1+\int_\Omega F(\D u)\d x\right)+c(\tau)\left(\int_\Omega \lvert A_2\rvert^q\d x+\left(\int_\Omega \lvert A_2\rvert^q\d x\right)^\frac 2 q\right).
\end{align*}
Since $A_2\to 0 $ in $\LL^q(\Omega)$, we deduce that
\begin{align*}
\int_\Omega \p_z F(A_1)\cdot A_2\d x\to 0,
\end{align*}
which concludes the proof.
\end{proof}

Finally, we turn to the case where we make the a-priori assumption that $u\in \LL^\infty(\Omega)$. 
\begin{lemma}\label{lem:Linfty}
Let $1\leq p\leq q< p+1$. Assume $F\equiv F(z)$ satisfies \eqref{def:ellipticity} and \eqref{def:naturalGrowth}. Further suppose $u\in \LL^\infty(\Omega)$. Then
\begin{align*}
\int_\Omega F(\D u_\e)\to \int_\Omega F(\D u).
\end{align*}
If $F\equiv F(z)$ satisfies \eqref{def:ellipticity}, \eqref{def:controlledGrowth}, \eqref{def:doubling} and $u$ solves \eqref{eq:min}, the conclusion holds under the restriction $2\leq p\leq q<p+2$.
\end{lemma}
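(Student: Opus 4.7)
The plan is to combine the reasoning of Lemmas \ref{lem:xDependent} and \ref{lem:controlled}, substituting the $\LL^\infty$ interpolation estimate from Lemma \ref{lem:mollification}(vi) for the fractional bound (v), so as to exploit the hypothesis $u\in\LL^\infty$. Throughout, I would write $\D u_\e = A_1 + A_2$ as in \eqref{eq:derivative}, and denote by $v$ the extension of $u$ by $g$ outside $\Omega$. Since $F\equiv F(z)$, the remark after Lemma \ref{lem:A1} yields $\int_\Omega F(A_1)\d x\to\int_\Omega F(\D u)\d x$ with no restriction on $q/p$; only the error terms require attention.

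For the natural growth case, I would start from \eqref{eq:diffF}, which gives
$$\int_\Omega F(\D u_\e)\d x\leq \int_\Omega F(A_1)\d x+C\int_\Omega |A_2|\bigl(1+|A_1|^{q-1}+|A_2|^{q-1}\bigr)\d x.$$
The interpolation bound with $s=q\geq p$ yields $\|v\star\rho_{\delta_i\e}-v\|_{\LL^q((1+\delta/2)B_i)}^q\lesssim (\delta_i\e)^p\|v\|_{\WW^{1,p}((1+\delta)B_i)}^p\|v\|_{\LL^\infty}^{q-p}$; summing over $i$ using \eqref{def:WBcoverMultiplicity} and \eqref{eq:partitionOfUnityDeriv}, and choosing $N\geq q/p$, gives $\int_\Omega |A_2|^q\d x\lesssim \e^p\to 0$. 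The mixed term $\int_\Omega |A_1|^{q-1}|A_2|\d x$ I would estimate by H\"older's inequality with exponents $p/(q-1)$ and $p/(p-q+1)$, which are both in $(1,\infty)$ precisely thanks to $q<p+1$. The first factor is uniformly bounded by $\|A_1\|_{\LL^p}$ as in Lemma \ref{lem:ueConvergence}, and the second by the same interpolation with $s=p/(p-q+1)\geq p$, giving a positive power $\e^{p-q+1}\to 0$. The piece $\int |A_2|\d x$ is then controlled by H\"older.

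For the controlled growth case, \eqref{def:controlledGrowth} and convexity of $F$ give
$$F(\D u_\e)-F(A_1)\leq \p_z F(A_1)\cdot A_2+\Lambda\bigl(1+|A_1|+|\D u_\e|\bigr)^{q-2}|A_2|^2,$$
and I would follow the scheme of Lemma \ref{lem:controlled}. The remainder term is controlled by H\"older with exponents $p/(q-2)$ and $p/(p-q+2)$ (valid since $q<p+2$), reducing matters to $\|A_2\|_{\LL^{2p/(p+2-q)}}^2\to 0$; this follows from the same interpolation with $s=2p/(p+2-q)\geq p$, yielding a factor $\e^{p+2-q}\to 0$. For the cross term $\int_\Omega \p_z F(A_1)\cdot A_2\d x$ I would reproduce the Fenchel-conjugate argument of Lemma \ref{lem:controlled}: from $F^\ast(\p_z F(A_1))\leq c+(s-1)^{-1}F(sA_1)$ for some $s>1$, combined with Jensen's inequality on the mollifications defining $A_1$ and with the doubling hypothesis \eqref{def:doubling} to absorb the dilation, I obtain the uniform bound $\int_\Omega F^\ast(\p_z F(A_1))\d x\lesssim 1+\int_\Omega F(\D u)\d x$ (finite since $u$ minimises \eqref{eq:min}); the lower bound $|z|^{q^\prime}-1\lesssim F^\ast(z)$ then upgrades this to a uniform $\LL^{q^\prime}$-bound on $\p_z F(A_1)$. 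Combined with the a.e.\ convergence $A_1\to \D u$ from Lemma \ref{lem:ueConvergence}, dominated convergence yields $\p_z F(A_1)\to \p_z F(\D u)$ in $\LL^{q^\prime}$, and H\"older with $\|A_2\|_{\LL^q}\to 0$ completes the argument.

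The main obstacle I anticipate is the exponent book-keeping: the thresholds $q<p+1$ and $q<p+2$ are sharp for keeping the H\"older dual exponent on $A_2$ finite, while the freedom in choosing $N$ large enough in $\delta_i=C_0 r_i^N$ ensures the resulting power of $\e$ from the $\LL^\infty$ interpolation is positive. The Fenchel-conjugate step is the most delicate point of the controlled case, and it is precisely there that the doubling assumption \eqref{def:doubling} plays an essential role.
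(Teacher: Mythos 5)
Your proposal is correct and follows essentially the same route as the paper: start from \eqref{eq:diffF} (respectively the Taylor-type bound coming from \eqref{def:controlledGrowth}), reduce to the cross-terms $\int|A_2||A_1|^{q-1}$ and $\int|A_2|^2|A_1|^{q-2}$ via H\"older with the conjugate pair $\bigl(p/(q-1),\,p/(p+1-q)\bigr)$ (respectively $\bigl(p/(q-2),\,p/(p+2-q)\bigr)$), use the $\LL^\infty$-interpolation estimate of Lemma \ref{lem:mollification} to gain a positive power of $\e$ on $A_2$, and handle $\int\p_z F(A_1)\cdot A_2$ by the Fenchel-conjugate argument of Lemma \ref{lem:controlled}, which is exactly what the paper invokes. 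The only substantive difference is cosmetic: you apply the $\LL^\infty$ interpolation to $A_2$ on all of $\Omega$ with the reference $v=u$, whereas the paper splits into $\Omega_\e$ and $\Omega\setminus\Omega_\e$ and uses $g\in\WW^{1,q}$ near the boundary together with the linear mollification rate. Because the mollification radius $\delta_i\e$ is calibrated so that it stays inside $(1+\delta)B_i\subset\Omega$ (hence only sees $u$, not any extension), and because the hypothesis $u\in\LL^\infty(\Omega)$ holds up to the boundary, your global choice of reference point $u$ is legitimate and even slightly cleaner; be aware, though, that your use of the notation $\|v\|_{\LL^\infty}$ for $v$ extended by $g$ is imprecise (what enters the bound is $\|u\|_{\LL^\infty(\Omega)}$, since $g$ need not be bounded), and you should still note, as the paper does, the constraint on $N$ coming from the factor $r_i^{-1}$ in $|\D\psi_i|$: one needs $N(p+1-q)\geq 1$ (resp.\ $N(p+2-q)\geq 2$) to make the $r_i$-powers bounded and land on the claimed positive power of $\e$.
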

\begin{proof}
As in Lemma \ref{lem:xDependent} it suffices to deal with $\int_\Omega \lvert A_2\rvert \lvert A_1\rvert^{q-1}$. Using \eqref{lem:mollification}, H\"older's inequality (note $\frac p {p+q-1}\leq q$), \eqref{def:WBcoverMultiplicity} and \eqref{eq:overlappingCubes} we find
\begin{align*}
&\int_\Omega \lvert A_2\rvert \lvert A_1\rvert^{q-1}\\
\lesssim& \|\D v_\e\|_{\LL^p(\Omega)}^{q-1}\sum_{j\in I} \sum_{i\colon (1+\delta/2)B_i\cap (1+\delta/2)B_j\neq \emptyset}\Big(\int_{\Omega_\e} \lvert g\star \rho_{\delta_i,\e}-g\rvert^\frac p {p+1-q}|\D\psi_i|^\frac p {p+1-q}\\
&\quad+\int_{\Omega\setminus\Omega_\e} \lvert u\star \rho_{\delta_i,\e}-u\rvert^\frac p {p+1-q}|\D\psi_i|^\frac p {p+1-q}\Big)^\frac{p+1-q} p\\
\lesssim& \|\D v_\e\|_{\LL^p(\Omega)}^{q-1}\sum_{j\in I} \sum_{i\colon (1+\delta/2)B_i\cap (1+\delta/2)B_j\neq \emptyset} r_i^{-1}\Big((\delta_i \e)\|g\|_{\WW^{1,q}((1+\delta)B_i\cap\Omega_\e)}\\
&\quad+(\delta_i\e)^\frac p {p+1-q} \|u\|_{\LL^\infty((1+\delta)B_i\setminus\Omega_\e)}^{1-\frac p {p+1-q}}\|u\|_{\WW^{1,p}((1+\delta)B_i\setminus\Omega_\e)}^\frac p {p+1-q}\Big)\\
\lesssim& \e^\frac p {p+1-q} \|\D v_\e\|_{\LL^p(\Omega)}^{q-1}\left(\|g\|_{\WW^{1,q}(\Omega)}+\|u\|_{\LL^\infty(\Omega)}^{1-\frac p {p+1-q}}\|u\|_{\WW^{1,p}(\Omega)}^\frac p {p+1-q}\right)\to 0
\end{align*}
provided $N$ is sufficiently large that $-1 + N\frac p {p+1-q}\geq 0$.

The proof under \eqref{def:controlledGrowth} is nearly idential. First, it suffices to consider $\int_\Omega \lvert A_2\rvert^2 \lvert A_1\rvert^{q-2}$ arguing as in Lemma \ref{lem:controlled}. This can be estimated similar to the above argument.
\end{proof}


\begin{remark}\label{rem:xDep}
We note that we only use the fact that $u\in \LL^\infty(\Omega)$ in order to establish the interpolative bound
$$
\|u_\e-u\|_{\LL^p(B)}\lesssim \e^\frac p q \|u\|_{\WW^{1,p}(B+\e B_1)}^\frac p q\|u\|_{\LL^\infty(B+\e B_1)}^{1-\frac p q}.
$$
Thus, $\LL^\infty$ may be replaced by $\mathrm{BMO}(\Omega)$ in Lemma \ref{lem:Linfty}.
\end{remark}

The assumption that $u\in \LL^\infty(\Omega)$ is especially relevant in the scalar case $m=1$. In this case, we may assume without loss of generality that $u\in \LL^\infty(\Omega)$. This follows using an observation made in \cite{Bulicek2022}.
\begin{lemma}\label{lem:scalar}
Let $1<p$ and $m=1$. Assume $F$ satisfies \eqref{def:ellipticity} and \eqref{def:naturalGrowth}. Then there exists $\{u_k\}\subset g+\left(\WW^{1,p}_0(\Omega)\cap\LL^\infty(\Omega)\right)$ such that $u_k\to u\in \WW^{1,p}(\Omega)$ and $\F(u_k)\to \F(u)$.
\end{lemma}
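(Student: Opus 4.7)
The plan is to exploit the scalar structure $m = 1$ via truncation, which is the content of the observation in \cite{Bulicek2022}. Set $v = u - g \in \WW^{1,p}_0(\Omega)$ and, for $k > 0$, let $v_k = T_k(v)$ with $T_k(s) = \max(-k, \min(k, s))$. The essential feature of the scalar case is that $T_k$ preserves $\WW^{1,p}_0$-membership, with $\D v_k = \chi_{\{|v| < k\}}\,\D v$. Setting $u_k := g + v_k$ then yields a sequence in $g + (\WW^{1,p}_0(\Omega) \cap \LL^\infty(\Omega))$, as required.

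First I would verify $u_k \to u$ in $\WW^{1,p}(\Omega)$. This is immediate from dominated convergence: $v_k \to v$ a.e.\ with $|v_k| \leq |v| \in \LL^p$, and $\D v_k \to \D v$ a.e.\ with $|\D v_k| \leq |\D v| \in \LL^p$.

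For energy convergence, I would split on whether $\F(u)$ is finite. If $\F(u) = +\infty$, there is nothing to construct: lower semicontinuity of $\F$ on $\WW^{1,p}(\Omega)$ (a consequence of convexity of $F(x, \cdot)$ from \eqref{def:ellipticity}) forces $\liminf \F(u_k) \geq \F(u) = +\infty$, so $\F(u_k) \to \F(u)$. If $\F(u) < \infty$, the heart of the argument is the pointwise identity
\begin{align*}
F(x, \D u_k) \;=\; \chi_{\{|v|<k\}}\,F(x, \D u) \;+\; \chi_{\{|v|\geq k\}}\,F(x, \D g),
\end{align*}
which holds because $\D u_k = \D u$ on $\{|v|<k\}$ and $\D u_k = \D g$ elsewhere. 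The first summand converges to $\F(u)$ by dominated convergence with dominant $|F(x, \D u)| \in \LL^1(\Omega)$, while the second summand tends to $0$ by absolute continuity of the integral: $|\{|v|\geq k\}| \to 0$ since $v$ is a.e.\ finite, and $F(\cdot,\D g) \in \LL^1(\Omega)$ because $\D g \in \LL^q$ together with \eqref{def:naturalGrowth}.

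The main obstacle is essentially cosmetic: I need to confirm that $F(\cdot, \D u) \in \LL^1(\Omega)$ whenever $\F(u) < \infty$, so that the dominated convergence argument on the first summand is legitimate. This follows from the affine lower bound $F(x, z) \geq -C(1 + |z|)$, obtained by combining convexity of $F(x, \cdot)$ with the pointwise estimates on $F(x, 0)$ and $\partial_z F(x, 0)$ that \eqref{def:naturalGrowth} supplies; since $\D u \in \LL^p \subset \LL^1(\Omega)$, the negative part of $F(x, \D u)$ is integrable, so finiteness of $\F(u)$ forces $F(x, \D u) \in \LL^1(\Omega)$. Beyond this, the proof is elementary measure-theoretic bookkeeping built on the scalar truncation identity.
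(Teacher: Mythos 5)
Your proof is correct and follows essentially the same route as the paper's: truncate $v = u - g$ to level $k$, use the scalar chain rule $\D T_k(v) = \chi_{\{|v|<k\}}\D v$ to obtain the exact pointwise splitting of the energy into $\chi_{\{|v|<k\}}F(x,\D u) + \chi_{\{|v|\geq k\}}F(x,\D g)$, and pass to the limit using a lower bound on $F$ together with integrability of $F(\cdot,\D g)$ from \eqref{def:naturalGrowth}. Your explicit case split on whether $\F(u)<\infty$ and the affine lower bound $F(x,z)\gtrsim -(1+|z|)$ are slightly more verbose than the paper's terse appeal to ``a variant of dominated convergence'' with the uniform bound $F(x,\cdot)\geq -C_0$ from \eqref{def:ellipticity}, but the substance is the same.
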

\begin{proof}
Consider
$$
T_k u = \begin{cases}
	u \quad &\text{ if } \lvert u-g\rvert \leq k\\
	g+\frac{u-g}{\lvert u-g\rvert} k &\text{ else}.
	\end{cases}
$$
Then $T_k u\to u$ in $\WW^{1,p}(\Omega)$. Moreover,
\begin{align*}
\int_\Omega F(x,\D T_k u) = \int_\Omega F(x,\D g+\D (u-g) \,1_{\lvert u-g\rvert \leq k}).
\end{align*}

\begin{align*}
\int_\Omega F(x,\D g+\D (u-g) \,1_{\{\lvert u-g\rvert \leq k\}}) =& \int_{\Omega\cap \{\lvert u-g\rvert \leq k\}} F(x,\D u)+\int_{\Omega\cap \{\lvert u-g\rvert\geq k\}}F(x,\D g) \\
\lesssim& \int_\Omega F(x,\D u)+\int_\Omega 1+\lvert \D g\rvert^q \d x<\infty.
\end{align*}
Note that due to \eqref{def:ellipticity}, $F(x,\cdot)\geq -C_0$ for some $C_0\geq 0$ and almost every $x\in\Omega$. In particular, we conclude by a variant of the dominated convergence theorem that $\int_\Omega F(x,\D T_k u)\to \int_\Omega F(x,\D u).$ This concludes the proof.
\end{proof}
Having Lemma \ref{lem:scalar} at hand, we can apply Lemma \ref{lem:Linfty}, Remark \ref{rem:xDep} and apply a diagonal subsequence argument to obtain the following statement:
\begin{lemma}
Suppose $u\colon \Omega \to \R$. Let $1<p\leq q< p+1$. Suppose $F\equiv F(z)$ satisfies \eqref{def:ellipticity} and \eqref{def:naturalGrowth} and the outcome of Lemma \ref{lem:A1} holds. Then there exists $u_k\in g+\WW^{1,q}_0(\Omega)$ such that $u_k \to u$ in $\WW^{1,p}(\Omega)$ and $\F(u_k)\to \F(u)$ as $k\to\infty$.

If $F\equiv F(z)$ satisfies \eqref{def:ellipticity} and \eqref{def:controlledGrowth} it suffices to assume $2\leq p\leq q<p+2$.
\end{lemma}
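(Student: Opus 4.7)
The strategy is a standard diagonal subsequence argument combining the truncation provided by Lemma \ref{lem:scalar} with the approximation provided by Lemma \ref{lem:Linfty} (in conjunction with Remark \ref{rem:xDep}).

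\textbf{Step 1 (truncation).} I would first apply Lemma \ref{lem:scalar} to the scalar function $u$. This only requires \eqref{def:ellipticity} and \eqref{def:naturalGrowth}, both of which are in force under either of the growth hypotheses considered in the statement (since \eqref{def:controlledGrowth} together with \eqref{eq:diffF} implies \eqref{def:naturalGrowth} up to adjusting constants). The outcome is a sequence $v_k:=T_k u\in g+\bigl(\WW^{1,p}_0(\Omega)\cap\LL^\infty(\Omega)\bigr)$ with $v_k\to u$ in $\WW^{1,p}(\Omega)$ and $\F(v_k)\to \F(u)$. This reduces matters to approximating the bounded functions $v_k$.

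\textbf{Step 2 (bounded approximation).} For each fixed $k$, I would then apply the construction of Section \ref{sec:construction} to $v_k$, producing the family $(v_k)_\e$. Lemma \ref{lem:ueConvergence} delivers $(v_k)_\e\to v_k$ in $\WW^{1,p}(\Omega)$ as $\e\to 0$, and Lemma \ref{lem:Linfty} (whose $\LL^\infty$ hypothesis is met) yields $\F((v_k)_\e)\to \F(v_k)$ in the required range of $(p,q)$. The one additional point to verify is that $(v_k)_\e$ lies not merely in $g+\WW^{1,p}_0(\Omega)\cap\WW^{1,p}_\loc(\Omega)$, as given by Lemma \ref{lem:ueRegularity}, but actually in $g+\WW^{1,q}_0(\Omega)$. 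I would obtain this by repeating the density computation of Lemma \ref{lem:ueRegularity} with $p$ replaced by $q$: the $g$-contribution is handled as there using $g\in \WW^{1,q}(\Omega)$, while the $v_k-g$ contribution is controlled by the interpolative estimate Lemma \ref{lem:mollification}.(6), which exploits precisely the $\LL^\infty$ bound on $v_k-g$ and mirrors the error estimate appearing in the proof of Lemma \ref{lem:Linfty}.

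\textbf{Step 3 (diagonal extraction).} A standard extraction selects $\e_k\to 0$ with
\begin{align*}
\|(v_k)_{\e_k}-v_k\|_{\WW^{1,p}(\Omega)}+|\F((v_k)_{\e_k})-\F(v_k)|\leq \tfrac{1}{k},
\end{align*}
and the sequence $u_k:=(v_k)_{\e_k}\in g+\WW^{1,q}_0(\Omega)$ satisfies the conclusion of the lemma.

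\textbf{Main obstacle.} The delicate point is Step 2 under \eqref{def:controlledGrowth}: Lemma \ref{lem:Linfty} in the controlled regime was proved assuming that the input function solves \eqref{eq:min}, via Lemma \ref{lem:EulerLagrange}, whereas the truncation $v_k=T_k u$ is not in general a minimiser and the Euler--Lagrange estimate underlying Lemma \ref{lem:controlled} is unavailable. I would resolve this by noting that in the scalar case $m=1$ with $q<p+2$ any minimiser of \eqref{eq:min} under the controlled $(p,q)$-growth hypothesis is already in $\LL^\infty(\Omega)$ by De Giorgi--Moser type arguments in the spirit of \cite{Hirsch2020}, so that Step 1 becomes vacuous in this regime and one applies Lemma \ref{lem:Linfty} directly to $u$, with the diagonal extraction of Step 3 reducing to the selection of a single sequence $\e_k\to 0$.
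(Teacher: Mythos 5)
Your Steps 1--3 (truncate via Lemma~\ref{lem:scalar}, approximate each truncation via Lemma~\ref{lem:Linfty}, diagonalise) reproduce exactly the paper's one-line sketch preceding this lemma, so for the first part (natural growth, $q<p+1$) you are aligned with the paper. Your side remark that \eqref{def:controlledGrowth} implies \eqref{def:naturalGrowth} (take $w=0$) is correct and justifies invoking Lemma~\ref{lem:scalar} in the controlled regime.

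The obstacle you flag in Step~2 under \eqref{def:controlledGrowth} is genuine: the controlled part of Lemma~\ref{lem:Linfty} is proved ``arguing as in Lemma~\ref{lem:controlled}'', and that argument uses Lemma~\ref{lem:EulerLagrange} --- hence needs the input function to solve \eqref{eq:min} --- while $T_ku$ is not a minimiser. (The lemma as printed does not even list the minimiser or doubling hypotheses \eqref{def:doubling}, but they are clearly implicit: the lemma is only invoked in the proof of Theorem~\ref{thm:noLavrentiev}\ref{eq:scalarcontrolled}, where both are assumed.) However, your proposed resolution does not close the gap. The local boundedness of scalar minimisers from \cite{Hirsch2020} holds under $\tfrac qp\leq 1+\tfrac q{n-1}$, i.e.\ $q\leq \tfrac{p(n-1)}{n-1-p}$ when $p<n-1$; this is strictly smaller than $q<p+2$ whenever $p(p+2)<2(n-1)$ (e.g.\ $p=2$, $n=10$ gives $q\leq 18/7$ versus $q<4$). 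Moreover that result gives \emph{local} boundedness of minimisers of $\overline\F$, not the global $L^\infty$ bound needed for Lemma~\ref{lem:Linfty}, and would additionally require control on $g$.

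The fix is more elementary. Tracing the proof of Lemma~\ref{lem:controlled}/Lemma~\ref{lem:Linfty}, the minimiser hypothesis enters at a single point: through Lemma~\ref{lem:EulerLagrange}, to guarantee $\partial_z F(\D u)\in\LL^{q'}(\Omega)$ (the Fenchel/doubling argument then upgrades this to $\partial_z F(\tilde A_1)\to\partial_z F(\D u)$ in $\LL^{q'}$). For the scalar truncation this integrability is inherited: with $m=1$ one has $\D T_ku=\D u$ on $\{|u-g|\leq k\}$ and $\D T_ku=\D g$ on $\{|u-g|>k\}$, and since $|\partial_z F(z)|\lesssim 1+|z|^{q-1}$ and $g\in\WW^{1,q}(\Omega)$, one gets $\partial_z F(\D g)\in\LL^{q'}(\Omega)$; combined with $\partial_z F(\D u)\in\LL^{q'}(\Omega)$ for the minimiser $u$, this yields $\partial_z F(\D T_ku)\in\LL^{q'}(\Omega)$. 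The Fenchel estimate in Lemma~\ref{lem:controlled} applies to $T_ku$ as soon as $\int_\Omega F(\D T_ku)<\infty$, which Lemma~\ref{lem:scalar} supplies. So Lemma~\ref{lem:Linfty}'s controlled case holds with ``$u$ solves \eqref{eq:min}'' weakened to ``$\partial_z F(\D u)\in\LL^{q'}(\Omega)$'', and in that form it does apply to $T_ku$; your diagonal extraction in Step~3 then goes through.
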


\begin{remark}
Lemma \ref{lem:scalar} and hence also the cases \ref{eq:scalarnatural} and \ref{eq:scalarcontrolled} in Theorem \ref{thm:noLavrentiev} easily generalise to integrands with Uhlenbeck structure $F\equiv F(\lvert \cdot\rvert)$, see also \cite[Section 7]{Bulicek2022}.
\end{remark}

\section{Proof of Theorem \ref{thm:noLavrentiev}}\label{sec:theoremProof}
We have now collected all ingredients we need in order to prove Theorem \ref{thm:noLavrentiev}.
\begin{proof}[Proof of Theorem \ref{thm:noLavrentiev}]
Due to the definition in order to prove $\F=\overline \F$ in case \ref{eq:nonautonomous}--\ref{eq:scalarnatural}, it suffices for a fixed $u\in g+\WW^{1,p}_0(\Omega)$ to exhibit a sequence $\{u_\e\}\subset g+\WW^{1,q}_0(\Omega)$ such that $u_\e\rightharpoonup u$ weakly in $\WW^{1,p}(\Omega)$ and $\F(u_\e)\to \F(u)$. We claim that in the cases \ref{eq:nonautonomous}, \ref{eq:Linftynatural} and \ref{eq:scalarnatural} the sequence $\{u_\e\}$ constructed in Section \ref{sec:construction} has these properties. Due to Lemma \ref{lem:ueRegularity} and Lemma \ref{lem:ueConvergence} the regularity and convergence properties regarding $\{u_\e\}$ are satisfied. The fact that $\F(u_\e)\to \F(u)$ follows from Lemma \ref{lem:xDependent} and Lemma \ref{lem:Linfty} in the case \ref{eq:nonautonomous} and \ref{eq:Linftynatural}, respectively. In the case \ref{eq:scalarnatural} it follows from combining Lemma \ref{lem:scalar} with Lemma \ref{lem:Linfty}. The remaining case \ref{eq:natural} follows from Lemma \ref{lem:autonomous} using the sequence $\tilde u_\e$ constructed there.

In order to prove that the Lavrentiev phenomenon does not occur it suffices to prove that for $u\in g+\WW^{1,p}_0(\Omega)$ solving \eqref{eq:min}, it holds that $\F(u)=\overline\F(u)$. Thus it suffices to exhibit for such $u$ a sequence $\{u_\e\}\subset g+\WW^{1,q}_0(\Omega)$ such that $u_\e\rightharpoonup u$ weakly in $\WW^{1,p}(\Omega)$ and $\F(u_\e)\to \F(u)$. We claim again that the sequence $\{u_\e\}$ constructed in Section \ref{sec:construction} has these properties in the cases \ref{eq:controlledDuality}--\ref{eq:Linftycontrolled}. As before the regularity and convergence properties regarding $\{u_\e\}$ are satisfied. The fact that $\F(u_\e)\to\F(u)$ follows from Lemma \ref{lem:controlledDuality} in the case \ref{eq:controlledDuality}. In the case \ref{eq:Linftycontrolled} it follows from Lemma \ref{lem:Linfty}, while the case \ref{eq:scalarcontrolled} follows from combining Lemma \ref{lem:scalar} and Lemma \ref{lem:Linfty}. The only remaining case is \eqref{eq:controlled}, which follows from Lemma \ref{lem:controlled}.

Thus we have proven all cases in Theorem \ref{thm:noLavrentiev}.
\end{proof}

\vfill
\pagebreak

\appendix

\bibliographystyle{siam}

\end{document}